\documentclass[10pt, a4paper]{amsproc}
\usepackage{amsmath,amssymb,amsthm,enumerate,lineno,mathtools}
\usepackage{afterpage,diagbox,multirow,pdflscape,threeparttable} 
\usepackage{rotating}
\usepackage{hyperref}
\usepackage{footnote}
\usepackage{hhline}

\newtheorem{theorem}{Theorem}[section]
\newtheorem{proposition}[theorem]{Proposition}
\newtheorem{lemma}[theorem]{Lemma}
\newtheorem{corollary}[theorem]{Corollary}

\theoremstyle{definition}
\newtheorem{definition}[theorem]{Definition}

\theoremstyle{remark}
\newtheorem{remark}[theorem]{Remark}

\newcommand{\ch}{\text{\rm{ch}}}
\newcommand{\cvOT}{c_v^{\text{\rm{OT}}}}
\newcommand{\divisible}{\mathrm{div}}
\newcommand{\eK}{e_K}
\newcommand{\et}{\text{\rm{\'{e}t}}}
\newcommand{\Gal}{\mathrm{Gal}}
\newcommand{\Gm}{\mathbb{G}_\mathrm{m}}
\newcommand{\kab}{k^{\mathrm{ab}}}
\newcommand{\kbar}{\overline{k}}
\newcommand{\Kgm}{K_{g,\m}}
\newcommand{\kgm}{k_{g,\m}}
\newcommand{\m}{\boldsymbol{m}}
\newcommand{\mK}{{\mathfrak m}_K}
\newcommand{\OK}{{\mathcal O}_K}
\newcommand{\phiLK}{\varphi_{L/K}}
\newcommand{\phiLKp}{\varphi_{L/K'}}
\newcommand{\psiLK}{\psi_{L/K}}
\newcommand{\psiKpK}{\psi_{K'/K}}
\newcommand{\Q}{\mathbb{Q}}
\newcommand{\Ql}{\Q_\ell}
\newcommand{\Qp}{\Q_p}
\newcommand{\R}{\mathbb{R}}
\newcommand{\rank}{\mathrm{rank}}
\newcommand{\separable}{\mathrm{sep}}
\newcommand{\tor}{\mathrm{tor}}
\newcommand{\UKu}{U_K^u}
\newcommand{\Z}{\mathbb{Z}}

\hyphenation{asayama galois geyer habegger jacobson jarden kummer larsen minamide mochizuki mordell ohtani ozeki petersen pillay taguchi tsujimura zywina}

\begin{document}

\title[Mordell--Weil groups over large algebraic extensions]{Mordell--Weil groups over large algebraic extensions of fields of characteristic zero}

\author{Takuya Asayama}
\address{Center for Innovative Teaching and Learning, Institute of Science Tokyo, 2-12-1 Ookayama, Meguro-ku, Tokyo 152-8550, Japan}
\email{asayama@citl.isct.ac.jp}

\author{Yuichiro Taguchi}
\address{Department of Mathematics, Institute of Science Tokyo, 2-12-1 Ookayama, Meguro-ku, Tokyo 152-8551, Japan}
\email{taguchi@math.titech.ac.jp}

\subjclass[2020]{12E30, 12F05, 14K15, 14G27.}
\keywords{Mordell--Weil group, Kummer-faithful, infinite algebraic extension.}

\date{}

\begin{abstract}
We study the structure of the Mordell--Weil groups of 
semiabelian varieties over
large algebraic extensions 
of a finitely generated field of characteristic zero. 
We consider two types of algebraic extensions in this paper; one is of extensions obtained by adjoining the coordinates of certain points of various semiabelian varieties; the other is of extensions obtained as the fixed subfield in an algebraically closed field by a finite number of automorphisms. 
Some of such fields turn out to be new examples of 
Kummer-faithful fields which are not sub-$p$-adic. 
Among them, we find both examples of Kummer-faithful fields 
over which the Mordell--Weil group modulo torsion 
can be free of infinite rank and not free.
\end{abstract}

\maketitle

\section{Introduction}
\label{sec-introduction}

In this paper,  we study the structure of the Mordell--Weil groups of
semiabelian varieties over large algebraic extensions of
a finitely generated field of characteristic zero.
In~1974, Frey and Jarden~\cite[p.~127]{FreyJ} conjectured that,
if $A$ is an abelian variety over an algebraic number field $k$,
then the Mordell--Weil group $A(\kab)$ of $A$ over the
the maximal abelian extension
$\kab$ of $k$ is of infinite rank.
There have been many works related to this conjecture (\cite{RW, Im06REC, Kobayashi, Petersen, SY, IL13ir}, to cite a few).
Some authors~\cite{Moon09, Moon12, GHP} have proved that,
for certain algebraic extensions $K/k$ of infinite degree,
the Mordell--Weil group $A(K)$ is not only of infinite rank
but also free modulo torsion. 
In particular, $A(K)$ modulo torsion has trivial divisible part; 
it is this property that we focus on in this paper. 
A perfect field $K$ is called \textit{Kummer-faithful} if, for any 
semiabelian variety $A$ over a finite extension $L$ of $K$, 
the Mordell--Weil group $A(L)$ has trivial divisible part 
(for more detail, see Definition~\ref{def:kummer faithful}).
In this paper, we obtain a wide class of Kummer-faithful fields that are not \textit{sub-$p$-adic} (see Definition~\ref{def:kummer faithful} for the definition).
This improves some of the previous results by Ozeki--Taguchi~\cite{OT} and by Ohtani~\cite{Ohtani22} (see also its corrigendum~\cite{Ohtani23}).
The notion of Kummer-faithfulness (resp.\ sub-$p$-adicness) originates from anabelian geometry and it is expected (resp.\ known) that a field possessing this property makes it suitable as a ground field for anabelian geometry.

Let  $k$  be a finite extension of the rational number field  $\Q$. 
As in~\cite{Ohtani22, Ohtani23}, we consider two types of algebraic extensions of  $k$. 
One is of extensions of  $k$  obtained by adjoining the coordinates of 
certain points of semiabelian varieties over  $k$. 
The other is of extensions obtained as the fixed subfields of 
an algebraic closure  $\kbar$  by some automorphisms
(in the study of the latter type of extensions, 
we allow  $k$  to be 
not only finite extensions of  $\Q$ but also 
arbitrary finitely generated fields of characteristic zero). 
Let us recall some key definitions for us:

\begin{definition}\label{def:kummer faithful}
\begin{enumerate}[(\roman{enumi})]
    \item (\cite[Definition~1.5]{Mochizuki15})
    A perfect field  $k$  is said to be \textit{Kummer-faithful} (resp.\ \textit{torally Kummer-faithful})
    if, for every finite extension $L$ of $k$ and every semiabelian variety (resp.\ every torus)
    $A$ over $L$, the divisible part
    $A(L)_\divisible=\bigcap_{n = 1}^{\infty} nA(L)$  of the Mordell--Weil group 
    $A(L)$  of  $A$  over  $L$  is trivial; 
    \[
    A(L)_\divisible\ =\ 0.
    \]
    \item (\cite[Definition~2.6~(2)]{OT})
    A perfect field  $k$  is said to be \textit{highly Kummer-faithful} 
    if, for every finite extension $L$ of $k$ and 
    every proper smooth 
    variety $X$ over $L$, it holds that 
    \[
    H^i_\et(X_{\kbar},\Ql(r))_{G_L}=0
    \]
    for any prime number  $\ell\not=\ch(k)$  and  
    any  $i, r$  with  $i\not=2r$, where the suffix 
    $(\text{--})_{G_L}$ means the largest quotient of 
    $(\text{--})$  on which the absolute Galois group $G_L$ of $L$ acts trivially.
    \item (\cite[Definition~15.4~(i)]{Mochizuki99})
    A field  $k$  is said to be \textit{sub-$p$-adic} 
    if there exists a prime number $p$  and 
    a finitely generated field extension $L$ of the $p$-adic number field $\Qp$ such that 
    $k$ is isomorphic to a subfield of $L$. 
\end{enumerate}
\end{definition}

Recall also that
the Grothendieck conjecture has been proved for 
hyperbolic curves over sub-$p$-adic fields~\cite{Mochizuki99} and, later, 
in~\cite{Mochizuki15}, \cite{Hoshi} etc., 
Mochizuki and Hoshi try to extend such results to 
hyperbolic curves over Kummer-faithful fields. 
Thus it would be interesting to supply many examples 
of Kummer-faithful fields that are not sub-$p$-adic.

It is known that:
\begin{itemize}
    \item if a \textit{Galois} extension $k$ of a Kummer-faithful field of characteristic zero is highly Kummer-faithful, then $k$ is Kummer-faithful~\cite[Proposition~2.8]{OT};
    \item a sub-$p$-adic field is Kummer-faithful, 
    but the converse is not true~\cite[Remark~1.5.4, (i) and (iii)]{Mochizuki15}.
\end{itemize}


To explain our \textit{first type} of results, 
let  $k$  be a finite extension of  $\Q$. 
For an integer  $g\geq 1$  and a family 
$\m=(m_p)_{p:\mathrm{prime}}$ of integers 
$m_p\geq 0$ indexed by prime numbers, let
\[
\kgm \coloneqq k(A(\kbar)[p^{m_p}] \mid A, p)
\] 
be the extension field of $k$ obtained by adjoining all coordinates of 
the $p^{m_p}$-torsion points of  $A$  
for 
all semiabelian variety  $A$  over  $k$  of dimension $\leq g$  and 
all prime numbers  $p$. 
In~\cite[Theorem~3.3]{OT}, it is proved that 
$\kgm$  is highly Kummer-faithful. 
In this paper, we consider the following larger field; let
\[
\Kgm \coloneqq k(p^{-m_p}A(k) \mid A, p)
\] 
be the extension field of  $k$ 
obtained by adjoining all coordinates of the points in 
\[
p^{-m_p}A(k) \coloneqq 
\{P\in A(\kbar)\mid p^{m_p}P\in A(k)\}
\] 
for 
all semiabelian variety  $A$  over  $k$  of dimension $\leq g$  and 
all prime numbers  $p$. 
Note that  $\Kgm$  contains  $\kgm$  as a subfield.
Our first main result is:

\begin{theorem}[$=$~Theorem~\ref{MT1-sec2}]\label{MT1}
The field  $\Kgm$  is highly Kummer-faithful.
\end{theorem}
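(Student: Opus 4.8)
The plan is to deduce high Kummer-faithfulness of $\Kgm$ from that of $\kgm$ --- which is exactly the content of \cite[Theorem~3.3]{OT} --- together with the trivial observation that every finite extension of a highly Kummer-faithful field is again highly Kummer-faithful (immediate from Definition~\ref{def:kummer faithful}~(ii)). So let $L$ be a finite extension of $\Kgm$ and $X$ a proper smooth variety over $L$; we must prove $H^i_\et(X_{\kbar},\Ql(r))_{G_L}=0$ for every prime $\ell$ and every $i\ne 2r$. The first step is a descent: since $\Kgm$ is the union of its finite subextensions over $\kgm$ and $L$ is finitely generated over $\Kgm$, there is a subfield $F_0\subseteq L$ which is finite over $\kgm$, satisfies $L=\Kgm\cdot F_0$, and over which $X$ already has a proper smooth model $X_0$. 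Moreover $\Kgm$ is Galois over $k$ --- each set $p^{-m_p}A(k)$ is stable under $G_k$ because the points of $A(k)$ are --- so $L=\Kgm F_0$ is Galois over $F_0$; and $F_0$, being finite over $\kgm$, is highly Kummer-faithful.

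The decisive step is to pin down $\Gamma:=\Gal(L/F_0)$. It is the compositum, over all semiabelian varieties $A$ over $k$ of dimension $\le g$ and all primes $p$, of the Kummer extensions $F_0\big(p^{-m_p}A(k)\big)/F_0$; and since $A[p^{m_p}]\subseteq\kgm\subseteq F_0$, the Kummer pairing embeds $\Gal\big(F_0(p^{-m_p}A(k))/F_0\big)$ continuously into $\mathrm{Hom}\big(p^{-m_p}A(k)/A[p^{m_p}],\,A[p^{m_p}]\big)$, a profinite abelian group annihilated by $p^{m_p}$. It follows that $\Gamma$ is a profinite abelian group whose pro-$\ell$ part is annihilated by $\ell^{m_\ell}$ for every prime $\ell$ and whose prime-to-$\ell$ part is pro-(prime to $\ell$).

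I would then use this to show that any continuous finite-dimensional $\Ql$-representation $\rho$ of $\Gamma$ has finite image. Indeed, after conjugation $\rho(\Gamma)\subseteq\mathrm{GL}_n(\Z_\ell)$; choosing an open torsion-free pro-$\ell$ subgroup $U\le\mathrm{GL}_n(\Z_\ell)$ (the principal congruence subgroup modulo $\ell$, resp.\ modulo $4$ when $\ell=2$), the pro-$\ell$ part of $\Gamma$ has bounded exponent and hence maps into $U$ as a torsion subgroup of a torsion-free group, while the prime-to-$\ell$ part maps into $U$ as a pro-(prime to $\ell$) subgroup of a pro-$\ell$ group; both are trivial, so $\rho(\Gamma)$ embeds into the finite group $\mathrm{GL}_n(\Z_\ell)/U$. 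Now set $V:=H^i_\et\big((X_0)_{\kbar},\Ql(r)\big)=H^i_\et\big(X_{\kbar},\Ql(r)\big)$. Since $G_L\trianglelefteq G_{F_0}$, the coinvariant space $V_{G_L}$ is a continuous finite-dimensional $\Ql$-representation of $\Gamma=G_{F_0}/G_L$, hence factors through a finite quotient; thus there is a finite extension $F_1/F_0$ inside $L$ with $G_{F_1}$ acting trivially on $V_{G_L}$. By the universal property of coinvariants, $V\twoheadrightarrow V_{G_L}$ factors through $V_{G_{F_1}}\twoheadrightarrow V_{G_L}$. But $F_1$ is finite over $\kgm$, hence highly Kummer-faithful, so for $i\ne 2r$ we get $V_{G_{F_1}}=H^i_\et\big((X_0\times_{F_0}F_1)_{\kbar},\Ql(r)\big)_{G_{F_1}}=0$, and therefore $V_{G_L}=0$. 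This proves the theorem.

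The main obstacle --- and the real substance of the argument --- is the middle part: extracting the Kummer-theoretic description of $\Gamma=\Gal(L/F_0)$ and deducing that it acts through a finite quotient on every $\ell$-adic representation. This is precisely what renders the ``extra size'' of $\Kgm$ over $\kgm$ harmless: it is absorbed by a finite base change, after which one falls back on \cite[Theorem~3.3]{OT}. By comparison, the descent of $X$ to a finite extension of $\kgm$ and the verification that $\Kgm/k$ is Galois are routine, though they deserve some care because $L$ is only assumed finite over $\Kgm$, not over $\kgm$.
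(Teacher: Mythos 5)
Your proof is correct, but it takes a genuinely different route from the paper's. The paper deduces high Kummer-faithfulness from a ramification bound: by \cite[Corollary~2.15]{OT} it suffices to show that $\Kgm/k$ has finite maximal upper ramification break at every finite place, and this is obtained by splitting each $k(p^{-m_p}A(k))/k$ into the layer $k(A[p^{m_p}])/k$, whose ramification is bounded by (the proof of) \cite[Theorem~3.3]{OT}, and the abelian layer $k(p^{-m_p}A(k))/k(A[p^{m_p}])$ of exponent dividing $p^{m_p}$, whose conductor is bounded via local class field theory (Lemma~\ref{lem:conductor}); the two bounds are glued by a lemma on upper-numbering filtrations in towers (Lemma~\ref{lem:ram.bound}). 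You instead exploit the same structural fact --- that the Kummer layer over $\kgm$ is abelian with pro-$p$ part of exponent dividing $p^{m_p}$ --- at the level of Galois representations: such a profinite group acts through a finite quotient on every continuous finite-dimensional $\Ql$-representation, so the coinvariants $H^i_\et(X_{\kbar},\Ql(r))_{G_L}$ are a quotient of the corresponding coinvariants over a finite extension $F_1$ of $\kgm$, which vanish for $i\neq 2r$ by \cite[Theorem~3.3]{OT} used as a black box. Your argument is softer (no local class field theory, no Herbrand functions) and cleanly isolates why the extra size of $\Kgm$ over $\kgm$ is harmless; the paper's route is heavier but yields the stronger intermediate statement that $\Kgm/k$ has uniformly bounded ramification at each place, which is the actual mechanism behind \cite[Corollary~2.15]{OT} and is of independent interest. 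The routine steps you defer (that $\Kgm/k$ is Galois, the descent of $X$ to $F_0$, and the passage from coinvariants over $G_L$ to coinvariants over $G_{F_1}$) all check out.
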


This extends Theorem 3.3 of \cite{OT}, which proved 
the same for the field $\kgm$, to the case of 
much larger field $\Kgm$.
Note that, if $m_p\geq 1$ for infinitely many primes $p$, 
then the field  $\Kgm$  is \textit{not} sub-$p$-adic, 
since the smaller field  
$\kgm$  is already not sub-$p$-adic~\cite[Remark~3.4]{OT}.

Another interesting feature of the field  $\Kgm$  is the following:


\begin{proposition}[$=$~Proposition~\ref{prop:not free-sec2}]\label{prop:not free}
Let  $A$  be a semiabelian variety  $A$  over  $k$  of dimension $\leq g$. 
If  $A(k)$  has a non-torsion point (i.e., if  $\rank(A(k))\geq 1$)  and  
$m_p\geq 1$  for infinitely many primes  $p$, then
the Mordell--Weil group  $A(\Kgm)$  modulo the 
torsion subgroup  $A(\Kgm)_\tor$  is {\rm not} free. 
In particular, it is not finitely generated. 
\end{proposition}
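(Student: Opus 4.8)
The plan is to produce a single nonzero element of the group $M \coloneqq A(\Kgm)/A(\Kgm)_\tor$ that is divisible by infinitely many distinct prime numbers. This is impossible inside a free abelian group, so it forces $M$ to be non-free; and since $M$ is torsion-free by construction, the structure theorem for finitely generated abelian groups then yields that $M$ is not even finitely generated.

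First I would fix a non-torsion point $P_0 \in A(k)$, which exists by the rank hypothesis. For each prime $p$ with $m_p \geq 1$, the group $A(\kbar)$ is divisible, so we may choose $Q_p \in A(\kbar)$ with $p^{m_p}Q_p = P_0$; by definition $Q_p \in p^{-m_p}A(k)$, and because $\dim A \leq g$ the coordinates of $Q_p$ lie in $\Kgm$, i.e.\ $Q_p \in A(\Kgm)$. Passing to $M$: the image $\overline{P_0}$ of $P_0$ is nonzero, since a non-torsion point of $A(k)$ stays non-torsion in the overgroup $A(\Kgm)$ and $M$ is the quotient of $A(\Kgm)$ by its torsion subgroup; and the relation $p^{m_p}Q_p = P_0$ descends to $p^{m_p}\overline{Q_p} = \overline{P_0}$ in $M$, so $\overline{P_0} \in pM$ for every prime $p$ with $m_p \geq 1$ --- an infinite set of primes by hypothesis. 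Finally one records the elementary fact that a nonzero element $x$ of a free abelian group $F$ lies in $pF$ for only finitely many primes $p$: expanding $x$ in a basis, $x \in pF$ if and only if $p$ divides the greatest common divisor of the finitely many nonzero coordinates of $x$. This contradicts the existence of $\overline{P_0}$, so $M$ is not free.

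The closest thing to an obstacle is the bookkeeping around the first step: one must be sure that the division points $Q_p$ genuinely lie in $\Kgm$ (immediate from the definition of $\Kgm$ once one notes $\dim A \leq g$ and that $p^{-m_p}A(k)$ is exactly the set of such $Q_p$) and that $\overline{P_0}$ survives as a nonzero element of the torsion quotient. It is worth remarking that this phenomenon is entirely consistent with $\Kgm$ being highly Kummer-faithful (Theorem~\ref{MT1}): the element $\overline{P_0}$ is divisible by infinitely many primes, but there is no reason for it to lie in $\bigcap_{n \geq 1} nM$, so the divisible part of $M$ may well remain trivial.
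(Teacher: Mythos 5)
Your proposal is correct and follows essentially the same route as the paper: the paper isolates your final step as a standalone lemma (a nonzero element of a free $\Z$-module cannot be divisible by infinitely many integers $>1$, proved by the same basis-expansion argument) and applies it to the image of a non-torsion $P_0\in A(k)$ in $A(\Kgm)/A(\Kgm)_\tor$, which is divisible by $p^{m_p}$ for infinitely many $p$ exactly as you observe. The bookkeeping you flag (that the division points lie in $\Kgm$ by definition, and that $\overline{P_0}$ is nonzero in the torsion quotient) is handled implicitly in the paper in the same way.
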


In some sense, Theorem~\ref{MT1} and Proposition~\ref{prop:not free} 
refer to \textit{opposite} characters of the field  $\Kgm$, because 
being Kummer-faithful means that the field is ``not too large'', whereas 
the fact that the Mordell--Weil group modulo torsion is not free may 
be interpreted as saying that the field is ``not too small". 
We discuss the relations between the size of fields and the Mordell--Weil groups over it in Remark~\ref{rem:R1}.


The \textit{second type} of extensions that we treat in this paper is of fields close to an algebraically closed field, meaning that they are obtained by cutting out of an algebraic closure of $K$ as the fixed subfield by finitely many automorphisms $\sigma_1, \ldots, \sigma_e$.
Fix an algebraic closure $\overline{K}$ of any field $K$ and let $K^\separable$ be the separable closure of $K$ in $\overline{K}$.
Let $G_K$ be the absolute Galois group $\Gal(K^\separable / K)$ of $K$ and $e$ a positive integer.
For any $\sigma = (\sigma_1, \ldots, \sigma_e) \in G_K^e$ (the direct product of $e$ copies of $G_K$), set $\overline{K}(\sigma)$ to be the fixed field of $\sigma$ in $\overline{K}$.
We also set $\overline{K}[\sigma]$ to be the maximal Galois subextension of $K$ in $\overline{K}(\sigma)$.
We equip the compact group $G_K^e$ with the normalized Haar measure $\mu = \mu_{G_K^e}$, which allows $G_K^e$ to be regarded as a probability space~\cite[Section~21.1]{FriedJ}.
The term \textit{almost all} $\sigma \in G_K^e$ is used in the sense of ``all $\sigma \in G_K^e$ outside some measure zero set".

Several studies on the structure of the Mordell--Weil groups of semiabelian varieties over $\overline{K}(\sigma)$ and over $\overline{K}[\sigma]$ have been carried out, which are summarized in Theorem~\ref{knownresultforKbarsigma} for the convenience of reference in later discussions.
We describe more detailed structures of the Mordell--Weil groups of semiabelian varieties over finite extensions of $\overline{K}(\sigma)$ and of $\overline{K}[\sigma]$ when $K$ is a finitely generated field over $\mathbb{Q}$.
Our first result in this context is the following:

\begin{theorem}[$=$ Theorem~{\ref{mythm-FreenessOfMWGmodTor}}]\label{mythm-Freeness-intro}
Suppose that $K$ is a finitely generated field over $\mathbb{Q}$ and $e \ge 2$.
Then, for almost all $\sigma \in G_K^e$, the following statement holds:
for any finite extension $L$ of $\overline{K}[\sigma]$ and any semiabelian variety $A$ of positive dimension over $L$, the group $A(L) / A(L)_\tor$ is a free $\mathbb{Z}$-module of rank $\aleph_0$.
\end{theorem}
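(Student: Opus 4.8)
The plan is to split the statement into its parts — that $A(L)/A(L)_\tor$ is torsion‑free (automatic), \emph{free}, and of rank exactly $\aleph_0$ — and to reduce everything to the case $L=\overline{K}[\sigma]$ with $A$ defined over a finite extension $K'$ of $K$ contained in $\overline{K}[\sigma]$. For the reduction of $L$: since $\overline{K}[\sigma]/K$ is Galois, any finite extension $L$ of $\overline{K}[\sigma]$ is contained in $\overline{K}[\sigma]\,\widetilde{K}$ for a suitable finite Galois $\widetilde{K}/K$ (the Galois closure over $K$ of the field generated by the finitely many $\overline{K}[\sigma]$‑conjugates of a primitive element of $L/\overline{K}[\sigma]$), and $A(L)/A(L)_\tor$ embeds into $A(\overline{K}[\sigma]\widetilde{K})/A(\overline{K}[\sigma]\widetilde{K})_\tor$; as a subgroup of a free $\Z$‑module is free, it suffices to treat the countably many fields $\overline{K}[\sigma]\widetilde{K}$, and after replacing $K$ by $\widetilde{K}$ (and the null set by its translate) the claim for all of them follows from the claim for $\overline{K}[\sigma]$. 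Since $\overline{K}[\sigma]$ is countable, $\rank\le\aleph_0$ automatically; that $\rank=\aleph_0$ for almost all $\sigma$ when $e\ge 2$ follows from (a mild, base‑change‑stable extension of) the infinite‑rank statements recorded in Theorem~\ref{knownresultforKbarsigma}. So the real content is freeness, and I expect the hypothesis $e\ge 2$ to be used there only through the finiteness of $A(\overline{K}[\sigma])_\tor$ (also part of Theorem~\ref{knownresultforKbarsigma}).

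For freeness I would invoke Pontryagin's criterion: a countable torsion‑free abelian group is free if and only if the saturation (purification) of every finitely generated subgroup is again finitely generated. As $\overline{K}[\sigma]$ is countable there are only countably many triples $(K',A,S)$ with $K'/K$ finite, $A$ a semiabelian variety over $K'$, and $S\subseteq A(K')$ finitely generated and containing a non‑torsion point; intersecting countably many conull sets, it therefore suffices to show that for almost all $\sigma$ and every such triple (with $K'\subseteq\overline{K}[\sigma]$, which we may assume when it matters) the saturation of the image of $S$ in the torsion‑free quotient $\bar A:=A(\overline{K}[\sigma])/A(\overline{K}[\sigma])_\tor$ is finitely generated — equivalently, that $S$ does not become ``infinitely divisible'' in $A(\overline{K}[\sigma])$.

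\emph{The core step.} Suppose it does. Then the infinite torsion group $\operatorname{Sat}_{\bar A}(\bar S)/\bar S$ either has infinite $\ell$‑primary part for some prime $\ell$, or is nonzero at infinitely many primes. In the first case one produces, for every $j$, a point $Q_j\in A(\overline{K}[\sigma])$ with $\ell^jQ_j\equiv P\pmod{A(\overline{K}[\sigma])_\tor}$ for a fixed non‑torsion $P\in S$, the $Q_j$ forming a compatible system in the sense that $K_0'(Q_0)\subseteq K_0'(Q_1)\subseteq\cdots$ over the finite extension $K_0'/K'$ generated by $A(\overline{K}[\sigma])_\tor$ (finite, since $e\ge 2$), chosen of bounded height for a suitable height function. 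The degrees $[K_0'(Q_j):K_0']$ cannot stay bounded: otherwise $P$ would be $\ell$‑divisible to all orders in $A(K''_\infty)$ for a fixed finite extension $K''_\infty/K'$, contradicting Kummer‑faithfulness of the finitely generated field $K''_\infty$ — or, what also suffices, the infinitely many distinct $Q_j$ would have bounded degree and bounded height, contradicting Northcott's theorem over the finitely generated field $K_0'$ (Moriwaki heights). Hence $[K_0'(Q_j):K_0']\to\infty$, so $\sigma_1,\dots,\sigma_e$ all fix the infinite extension $L_\infty:=\bigcup_j\widetilde{K'(Q_j)}$ of $K'$ (using that $\overline{K}[\sigma]/K'$ is Galois). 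By the Kummer theory of semiabelian varieties over finitely generated fields — the Kummer module attached to $P$ is open in $T_\ell A$, so the Galois group $H$ of the tower $K'(A[\ell^\infty],\ell^{-\infty}P)/K'$ has only finitely many orbits on the corresponding $T_\ell A$‑torsor — the fields $L_\infty$ that can arise lie over one of finitely many subfields whose Galois groups are closed subgroups of $H$ of infinite index; each such subgroup contains all of $\bar\sigma_1,\dots,\bar\sigma_e$ only on a $\mu$‑null set, and a finite union of null sets is null. In the second case, for infinitely many $\ell$ the point $P$ has an $\ell$‑division point $Q$ in $A(\overline{K}[\sigma])$; for all but finitely many of these $\ell$ the Kummer module is full modulo $\ell$, so $H^{(\ell)}:=\Gal(K'(A[\ell],\ell^{-1}P)/K')$ acts transitively and faithfully on the torsor of $\ell$‑division points, whence $\widetilde{K'(Q)}=K'(A[\ell],\ell^{-1}P)$ and ``$\sigma_i$ fixes $\widetilde{K'(Q)}$ for all $i$'' means $\bar\sigma_1=\dots=\bar\sigma_e=1$ in $H^{(\ell)}$, an event of $\mu$‑measure $(\#H^{(\ell)})^{-e}$ with $\#H^{(\ell)}\ge\ell$ (Serre's open image theorem for the abelian part, elementary cyclotomic theory for the toric part). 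Since $e\ge 2$, $\sum_\ell(\#H^{(\ell)})^{-e}<\infty$, so Borel–Cantelli gives a null set. Intersecting over $(K',A,S)$ and over $\ell$ completes the argument.

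The main obstacle, I think, is to pin down the two technical inputs in exactly the required generality — Kummer theory and the associated degree bounds for arbitrary semiabelian varieties over arbitrary finitely generated fields of characteristic zero (openness of the Kummer module, growth of $\ell$‑division field degrees, Northcott via Moriwaki heights), together with the passage between a semiabelian variety and its toric and abelian parts — after which the measure‑theoretic bookkeeping is routine and parallels the methods behind Theorem~\ref{knownresultforKbarsigma}. The remaining steps (the reduction from $L$ to $\overline{K}[\sigma]$, the extraction of the compatible division system from a non‑finitely‑generated saturation, and the handling of torsion) are elementary and I would isolate them as preliminary lemmas.
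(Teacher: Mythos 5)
Your architecture (Pontryagin's criterion plus a Borel--Cantelli control of division fields) is a genuinely different route from the paper's, and its core step is where the real gaps lie. The paper deduces freeness from a single soft input, Moon's Proposition~\ref{MoonProp}: if $L$ is a Galois extension of a countable field $K'$ and $A(L)_\tor$ is finite, then $A(L)/A(L)_\tor$ is already free of rank at most $\aleph_0$. The observation you are missing is that finiteness of torsion makes the Kummer-theoretic machinery unnecessary: if $N$ kills $A(L)_\tor$, if $P\in A(K'')$ with $K''/K'$ finite, and if $nQ=P$ with $Q\in A(L)$, then $\tau Q-Q\in A(L)[n]$ for every $\tau\in\Gal(L/K'')$, hence $NQ$ is Galois-invariant and lies in $A(K'')$; so the saturation of any finitely generated subgroup is controlled by the Mordell--Weil--N\'eron--Lang theorem over a finitely generated field, is therefore finitely generated, and Pontryagin's criterion applies with no probabilistic input at all. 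The only place where measure theory (and $e\ge2$) enters is the finiteness of $A(M)_\tor$ for finite extensions $M$ of $\overline{K}(\sigma)$, obtained from Theorems~\ref{knownresultforKbarsigma}~(2-ii) and~\ref{torptsoffinextofKbarsigma}~(1) via Weil restriction --- exactly as you anticipated. The paper's reduction of $L$ is also cleaner: since $\overline{K}[\sigma]/K$ is Galois, $L/K'$ is Galois for a suitable finite extension $K'$ of $K$ inside $L$, so Moon's proposition applies directly; your detour through $\overline{K}[\sigma]\widetilde{K}$ and ``replacing $K$ by $\widetilde{K}$'' does not transparently preserve the quantifier ``almost all $\sigma\in G_K^e$'', because $G_{\widetilde{K}}^e$ has positive but not full measure in $G_K^e$.

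As for the gaps in your core step: (i) the assertion that the Kummer module attached to $P$ is open in $T_\ell A$ is false in general --- it fails whenever $P$ lies in a proper algebraic subgroup, and for semiabelian varieties (as opposed to abelian varieties) there are genuine exceptions (deficient points in the sense of Jacquinot--Ribet) even after replacing $A$ by the Zariski closure of $\mathbb{Z}P$; what your argument actually requires is a quantitative lower bound on the growth of division-field degrees of an arbitrary non-torsion point of an arbitrary semiabelian variety over an arbitrary finitely generated field of characteristic zero, and this is precisely the deep input you defer rather than supply; (ii) in your second case the hypothesis only gives, for each of infinitely many $\ell$, \emph{some} element of $\bar S$ (varying with $\ell$) that becomes $\ell$-divisible, so you cannot reduce to a single fixed $P$ without running the Kummer argument on all of $S\otimes\mathbb{Z}/\ell$. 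Both points are repairable in principle, but they constitute essentially the entire difficulty of your approach, and the theorem does not need them.
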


We also investigate the Kummer-faithfulness of $\overline{K}(\sigma)$ and of $\overline{K}[\sigma]$.
Ohtani showed that, if $K$ is a number field and $e \ge 2$, then any finite extension of $\overline{K}[\sigma]$ is Kummer-faithful for almost all $\sigma \in G_K^e$ (see~\cite[Corollary~1]{Ohtani22} and its corrigendum~\cite[Corollary~1]{Ohtani23}).
Our next result contains an improvement over the one by Ohtani, that is, we show that the same conclusion holds even if $e = 1$ for arbitrary finitely generated field $K$ over $\mathbb{Q}$.
For the field $\overline{K}(\sigma)$, we obtain a partial result on its multiplicative group for any $e \ge 1$.
We remark that neither $\overline{K}(\sigma)$ nor $\overline{K}[\sigma]$ are sub-$p$-adic for almost all $\sigma \in G_K^e$ and any prime number $p$ (see Proposition~\ref{prop-NotSubpAdic}).

\begin{theorem}\label{mythm-SummaryOfKFnessForKbarsigma}
Let $K$ be a finitely generated field over $\mathbb{Q}$ and $e$ a positive integer.
\begin{enumerate}[\textup{(\arabic{enumi})}]
    \item \textup{($=$~Theorem~\ref{mythm-KFforKbarbrasigma})} Any finite extension of $\overline{K}[\sigma]$ is Kummer-faithful for almost all $\sigma \in G_K^e$.
    \item \textup{($=$~Theorem~\ref{mythm-TKF})} The multiplicative group of $\overline{K}(\sigma)$ has no nontrivial divisible point for almost all $\sigma \in G_K^e$.
\end{enumerate}
\end{theorem}
    

\afterpage{
\begin{landscape}
\begin{table}
    \caption{Possible structure of Mordell--Weil groups of abelian varieties $A$ and examples of $K$}
    \label{table:PossibleStructure}
    \centering
    \begin{threeparttable}
    \centering
    \begin{tabular}{|c|c|c|c|c|c|c|c|c|}
        \hline
        \multicolumn{4}{|c|}{\multirow{3}{*}{\diagbox[width=5.9cm,height=3\line]{$A(K) / A(K)_\tor$}{$A(K)_\tor$}}} & \multicolumn{3}{c|}{infinite} & \multirow{3}{*}{finite} \\ \cline{5-7}
        \multicolumn{4}{|c|}{} & \multicolumn{1}{c}{} & \multirow{2}{*}{$\begin{array}{c}
            \text{has non-trivial} \\
            \text{divisible part}
        \end{array}$} & \multirow{2}{*}{$\begin{array}{c}
            \text{has trivial} \\
            \text{divisible part}
        \end{array}$} & \\ \cline{5-5}
        \multicolumn{4}{|c|}{} & \multicolumn{1}{c|}{$(\Q / \Z)^{\oplus 2 g}$} & & & \\ \hline
        \multirow{4}{*}{$\begin{array}{c}
            \text{of} \\
            \text{inf.} \\
            \text{rank}
        \end{array}$} & \multirow{3}{*}{$\begin{array}{c}
            \text{not} \\
            \text{free}
        \end{array}$} & \multicolumn{1}{c}{} & \multicolumn{1}{|c|}{$\begin{array}{c}
            \text{inf.\ dim'l} \\
            \Q\text{-vect.\ sp.}
        \end{array}$} & \multicolumn{1}{c|}{$\begin{array}{c}
            \bullet\, K\text{: alg.\ clsd.} \\
            \text{(classical)}
        \end{array}$} & $\begin{array}{c}
            \bullet\, K\text{: real clsd.} \\
            \text{\cite{Lowry}}
        \end{array}$ & & \\ \cline{4-8}
        & & \multicolumn{2}{c|}{$\begin{array}{c}
            \text{has non-trivial} \\
            \text{divisible part}
        \end{array}$} & \multicolumn{1}{c|}{} & & $\diamond\, K = \kbar(\sigma),\, \sigma \in G_k \tnote{a}$ & \\ \cline{3-8}
        & & \multicolumn{2}{c|}{$\begin{array}{c}
            \text{has trivial} \\
            \text{divisible part}
        \end{array}$} & \multicolumn{1}{c|}{} & & $\begin{array}{c}
            \bullet\, K = \Kgm \tnote{b} \\
            \diamond\, K = \kbar(\sigma),\, \sigma \in G_k \tnote{a}
        \end{array}$ & \\ \cline{2-8}
        & \multicolumn{3}{c|}{free} & \multicolumn{1}{c|}{$\begin{array}{c}
            \bullet\, K = k(A(\kbar)_\tor) \tnote{c} \\
            \text{\cite{Larsen05}}
        \end{array}$} & & $\diamond\, K = \kbar(\sigma),\, \sigma \in G_k \tnote{a}$ & $\begin{array}{c}
            \bullet\, K = \kbar[\sigma] \text{ for a.a.} \\
            \sigma \in G_k^e \text{ with } e \ge 2 \tnote{d}
        \end{array}$ \\ \hline
        \multicolumn{4}{|c|}{of finite rank} & \multicolumn{1}{c|}{} & & & $\begin{array}{c}
            \bullet\, K\text{: fin.\ ext.\ of } k \\
            \text{(Mordell--Weil)}
        \end{array}$ \\ \hline
    \end{tabular}
    \begin{tablenotes}
          \item[a]{We know that the field $\kbar(\sigma)$ is located in one of the three cells shown in the table for almost all $\sigma \in G_k$ (by~\cite{FreyJ, Zywina16, JP19}), but do not know which one it is in.}
          \item[b]{For $\m = (m_p)_p$ with $m_p \ge 1$ for infinitely many $p$ and $A$ of dimension $\le g$ with $\rank(A(k)) \ge 1$ (by Theorem~\ref{MT1} ($=$~Theorem~\ref{MT1-sec2}) and Proposition~\ref{prop:not free} ($=$~Proposition~\ref{prop:not free-sec2})).}
          \item[c]{Larsen~\cite{Larsen05} proved that $A(K) \cong M \oplus (\Q / \Z)^{\oplus 2 g}$ for any abelian variety $A$ over $k$ of dimension $g$, where $K = k(A(\kbar)_\tor)$ and $M$ is a free $\Z$-module, and proved that $\rank(M) = \infty$ if $A$ is an elliptic curve (the result for the case where $k = \mathbb{Q}$ and $A$ is an elliptic curve was independently obtained by Habegger~\cite{Habegger}).
          He asked whether $\rank(M)$ is infinite for a general $A$, but it remains open.}
          \item[d]{Follows by Theorem~\ref{mythm-Freeness-intro} ($=$~Theorem~\ref{mythm-FreenessOfMWGmodTor}) and~\cite{JJ01, GJ06}.}
    \end{tablenotes}
    \end{threeparttable}
\end{table}
\end{landscape}
}


\begin{remark}\label{rem:R1}

    We take this opportunity to summarize the situation of our interest in
    Table~\ref{table:PossibleStructure}:
    Let  $A$  be an abelian variety 
    (not semiabelian, for simplicity) 
    of dimension  $g\geq 1$  
    defined over a finitely generated field  
    $k$  of characteristic zero, and 
    $K$ an algebraic extension of  $k$. 
    We denote by 
    $A(K)_\tor$  the torsion subgroup of the Mordell--Weil group  $A(K)$  
    of  $A$  over  $K$.
    Among the fields appearing in the table, Kummer-faithful fields are $\Kgm$ for any $g$ and any $\m$, $\kbar[\sigma]$ for almost all $\sigma \in G_k^e$ with $e \ge 2$ (actually with $e \ge 1$; the freeness result is not known for $e = 1$), and any finite extension $K$ of $k$.
    The algebraic closure and the real closure of $k$ are not Kummer-faithful.
    Recently, Lowry~\cite[Theorem~1]{Lowry} determined the structure of $A(K)$ when $K$ is a real closed field.
    Another example of a non-Kummer-faithful field $K$ is $K = k(A(\kbar)_\tor)$ for a fixed $A$; obviously $A(K)$ contains a non-trivial divisible subgroup $A(\kbar)_\tor$.
    Larsen~\cite[Theorem~3.1]{Larsen05}, Habegger~\cite[Corollary~1.2]{Habegger}, and Bays--Hart--Pillay~\cite[Lemma~A.7]{BHP} revealed a more detailed structure of this $A(K)$.
    The Kummer-faithfulness for $\kbar(\sigma)$ with $\sigma \in G_k$ is not known.
    Note that we make no claim on fields that fall into the blanks in Table~\ref{table:PossibleStructure}, including their existence.
    Furthermore, we do not intend to claim that the non-freeness of $A(K) / A(K)_\tor$ implies the infiniteness of the rank of it.
\end{remark}

This paper is organized as follows: 
in Section~\ref{sect:Proof of MT1}, 
we prove Theorem~\ref{MT1}. 
In Section~\ref{sec-preliminaries}, we recall some previous results needed in order to prove the results on the second type of extensions.
Section~\ref{sec-FreenessOfMWGmodTor} is devoted to proving that the Mordell--Weil groups modulo torsion of semiabelian varieties over a finite extension of $\overline{K}[\sigma]$ are free.
We also deduce that the Mordell--Weil group of any semiabelian variety over such a field is the direct sum of a finite torsion subgroup and a free $\mathbb{Z}$-module of denumerable rank.
Section~\ref{sec-KFforKbarsigma} concerns the Kummer-faithfulness of $\overline{K}(\sigma)$ and of $\overline{K}[\sigma]$.
Since the Kummer-faithfulness means that the Mordell--Weil group does not contain groups like $\mathbb{Q}$ or $\mathbb{Q} / \mathbb{Z}$ as subgroups, our results are thought of as describing more detailed structures of the Mordell--Weil groups of semiabelian varieties over such fields.

Section~\ref{sec-FreenessOfMWGmodTor} and a part of Section~\ref{sec-KFforKbarsigma} are originally part of the PhD thesis~\cite{AsayamaThesis} of the first author.
Theorem 4.2.1 in the thesis, which claimed the (torally) Kummer-faithfulness property for $\overline{K}(\sigma)$, is excluded in this paper because its proof was incorrect.
Note that Theorem~\ref{mythm-SummaryOfKFnessForKbarsigma}~(2) ($=$~Theorem~\ref{mythm-TKF}) in this paper is the corrected (weaker) version of this assertion.


\section*{Acknowledgments}
The authors thank \mbox{Naganori} \mbox{Yamaguchi} for helpful discussions during the preparation of this paper.
They are grateful to 
\mbox{Benjamin} \mbox{Collas}, 
\mbox{Shinichi} \mbox{Mochizuki}, 
\mbox{Yoshiyasu} \mbox{Ozeki}, 
\mbox{Akio} \mbox{Tamagawa}, and 
\mbox{Shota} \mbox{Tsujimura}
for their useful comments on an early version of the manuscript.
This work was supported 
by the Research Institute for Mathematical Sciences,
an International Joint Usage/Research Center located in Kyoto University.
The second author was supported
by JSPS \mbox{KAKENHI} Grant Number~\mbox{JP23K03068}.

This paper has been published in Res.\ Number Theory \textbf{11} (2025), Paper No.~89, 18 pp.
(\href{https://doi.org/10.1007/s40993-025-00658-2}{\texttt{https://doi.org/10.1007/s40993-025-00658-2}})
Open Access funding provided by Institute of Science Tokyo.

\section{Proof of Theorem~\ref{MT1}}\label{sect:Proof of MT1}

We begin by proving two lemmas needed in the proof 
of Theorem~\ref{MT1}  ($=$~Theorem~\ref{MT1-sec2}). 
Let  $K$  be a complete discrete valuation field with perfect residue field. 
If $L/K$ is a finite Galois extension, 
then the Galois group  $G$  has two filtrations; 
the lower-numbering filtration $(G_u)_{u\in\R}$ and 
the upper-numbering filtration $(G^v)_{v\in\R}$. 
These are related by the Herbrand function $\phiLK$  defined by
\[
\phiLK(u)\ =\ \int_0^u\frac{dt}{(G_0:G_t)}
\]
and its inverse 
$\psiLK$ in such a way that
\[
G^v\ =\ G_{\psiLK(v)}\qquad\text{and}\qquad
G^{\phiLK(u)}\ =\ G_u
\]
(cf.~\cite[Chapter~IV, Section~3]{Serre}). 
If  $H$  is a subgroup of  $G$, we have
\[
H_u\ =\ H\cap G_u,
\]
and, if further  $H$  is normal, then
\[
(G/H)^v\ =\ G^vH/H.
\]
Using this property, one can define the upper-numbering filtration  
$(G^v)_{v\in\R}$  
for an arbitrary (not necessarily finite) Galois extension $L/K$ 
by defining  $G^v$  to be the projective limit of  
$\Gal(L'/K)^v$, where 
$L'/K$ runs over all finite Galois subextensions of  $L/K$.

For a Galois extension $L/K$ with Galois group $G$ 
and a real number $c$, we say 
that 
$L/K$  has {\it ramification bounded by} $c$ if 
$G^c=1$. 
Note that $L/K$  has ramification bounded by $c$ 
if and only if all finite Galois subextensions  $L'/K$  have 
ramification bounded by $c$.

\begin{lemma}\label{lem:ram.bound}
Let  
$L/K$  be a Galois extension and
$K'/K$  a finite Galois subextension of $L/K$. 
If both $L/K'$ and  $K'/K$  have ramification bounded by $c$, 
then $L/K$ has also ramification bounded by $c$.
\end{lemma}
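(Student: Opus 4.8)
The plan is to reduce the problem to a statement about finite Galois subextensions and then to control the upper-numbering filtration through the Herbrand functions. First I would recall that, by definition, it suffices to show that every finite Galois subextension $M/K$ of $L/K$ has ramification bounded by $c$, i.e.\ $\Gal(M/K)^c = 1$. Fix such an $M$, and enlarge $K'$ inside $M$: replacing $K'$ by $K'' := K' \cdot (M \cap K')$ is harmless since $K''/K$ is still a finite Galois subextension of $L/K$ and, by the subgroup/quotient compatibilities recalled just before the lemma ($H_u = H \cap G_u$ and $(G/H)^v = G^v H/H$), a subextension and a quotient of an extension with ramification bounded by $c$ again has ramification bounded by $c$; so without loss of generality $K' \subseteq M$.

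Now set $G = \Gal(M/K)$, $H = \Gal(M/K') \trianglelefteq G$, so that $G/H = \Gal(K'/K)$. The hypotheses give $H^c = 1$ (as $M/K'$ is a finite Galois subextension of $L/K'$, which has ramification bounded by $c$) and $(G/H)^c = 1$ (from $K'/K$). The key step is the compatibility of the upper-numbering filtration with quotients: $(G/H)^v = G^v H / H$, so $(G/H)^c = 1$ forces $G^c \subseteq H$. Then, using $H^u = H \cap G_u$ together with the Herbrand-function relation between the two filtrations, I would translate $G^c \subseteq H$ into a statement about lower numbering: the point at which $G^c$ sits, namely $u_0 = \psi_{M/K}(c)$, satisfies $G_{u_0} = G^c \subseteq H$, hence $H_{u_0} = H \cap G_{u_0} = G^c$. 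The remaining task is to see that $H_{u_0}$ lies above the $c$-th step of $H$'s \emph{own} upper numbering, i.e.\ that $\varphi_{M/K'}(u_0) \ge c$; combined with $H^c = 1$ this yields $G^c = H_{u_0} \subseteq H^{\varphi_{M/K'}(u_0)} \subseteq H^c = 1$.

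The main obstacle is precisely this last comparison of Herbrand functions, which is the transitivity property $\varphi_{M/K} = \varphi_{M/K'} \circ \varphi_{K'/K}$ (equivalently $\psi_{M/K} = \psi_{K'/K} \circ \psi_{M/K'}$), valid for a tower $K \subseteq K' \subseteq M$ with $M/K$ and $K'/K$ Galois (cf.\ \cite[Chapter~IV, Section~3, Proposition~15]{Serre}). Granting this, $u_0 = \psi_{M/K}(c) = \psi_{M/K'}(\psi_{K'/K}(c))$, and since $\psi_{K'/K}(c) \ge c$ (because $\psi_{K'/K}$ dominates the identity, $\psi_{K'/K}(0)=0$ and its slope is $\ge 1$) and $\psi_{M/K'}$ is increasing, we get $u_0 \ge \psi_{M/K'}(c)$, hence $\varphi_{M/K'}(u_0) \ge c$ as needed. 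One has to be mildly careful that all the stated compatibilities ($H_u = H\cap G_u$, $(G/H)^v = G^v H/H$, and Herbrand transitivity) are being invoked for the \emph{finite} extension $M/K$, which is exactly the setting in which \cite[Chapter~IV]{Serre} proves them; passing back to the possibly-infinite $L/K$ is then immediate from the definition of $G^v$ as a projective limit over finite Galois subextensions. I would also note explicitly the monotonicity/convexity facts about $\varphi$ and $\psi$ that make $\psi_{K'/K}(c) \ge c$ (namely $\varphi_{L/K}(u) \le u$ for $u \ge 0$), as these are the only genuinely "calculational" inputs and are entirely standard.
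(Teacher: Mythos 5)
Your argument is correct and is essentially the paper's own proof: both rest on the quotient compatibility $(G/H)^c = G^cH/H$, the subgroup compatibility $H_u = H\cap G_u$, and the Herbrand transitivity giving $\varphi_{L/K'}(\psi_{L/K}(c)) = \psi_{K'/K}(c) \ge c$, the only difference being that you first reduce to finite Galois quotients (a harmless, arguably more careful, packaging of the same computation the paper performs directly on the pro-finite groups). One small slip in the reduction step: to arrange $K' \subseteq M$ you should replace $M$ by the compositum $MK'$ (a quotient argument then recovers the claim for $M$), not replace $K'$ by $K'\cdot(M\cap K')$, which equals $K'$.
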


\begin{proof}
Put 
$G=\Gal(L/K)$  and 
$H=\Gal(L/K')$. 
For each real number  $c$, we have an exact sequence
\[
1\ \to\ G^c\cap H\ \to\ G^c\ \to\ (G/H)^c\ \to\ 1.
\]
We have 
\begin{align*}
G^c\cap H\ 
&=\ 
G_{\psiLK(c)}\cap H\ =\ 
H_{\psiLK(c)}\\
&=\ 
H^{\phiLKp(\psiLK(c))} \ =\ 
H^{\psiKpK(c)}\ \subset H^c.
\end{align*}
Thus if both  $H^c=1$  and  $(G/H)^c=1$  hold, then we have 
$G^c=1$.
\end{proof}

\begin{lemma}\label{lem:conductor}
Assume that  $K$  has finite residue field 
with absolute ramification index  $\eK$ and residue characteristic $p_K$. 
If  $L/K$  is a finite abelian extension of exponent  $p^m$
for some prime number  $p$  and integer  $m\geq 1$, 
then it has ramification bounded by 
\[
\begin{cases}
1                                         &\text{if}\ p_K\not=p, \\
\eK\left(m+\dfrac{1}{p-1}\right) &\text{if}\ p_K=p.
\end{cases}
\]
\end{lemma}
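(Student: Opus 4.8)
The plan is to bound the ramification via local class field theory, dealing with the tame case by hand and then controlling the conductor of the maximal abelian extension of the relevant exponent. Assume first that $p_K\neq p$. Then $[L:K]$ is a power of $p$, hence prime to $p_K$, so $L/K$ is tamely ramified; thus the wild inertia subgroup $G_1$ of $G=\Gal(L/K)$ is trivial, so $G_u=1$ for all $u\geq 1$. Since $\psiLK(v)\geq v$ for every $v$, we get $G^{v}=G_{\psiLK(v)}=1$ for all $v\geq 1$; in particular $G^1=1$, i.e.\ $L/K$ has ramification bounded by $1$.

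Now assume $p_K=p$ and write $e=e_K$. As $K$ is a finite extension of $\Qp$, the subgroup $(K^\times)^{p^m}$ is open of finite index in $K^\times$, and by local class field theory it is the norm subgroup of the maximal abelian extension $M/K$ of exponent dividing $p^m$, the Artin map inducing an isomorphism $\theta_K\colon K^\times/(K^\times)^{p^m}\xrightarrow{\ \sim\ }\Gal(M/K)$. Since $L/K$ is abelian of exponent $p^m$ we have $L\subseteq M$, and $\Gal(L/K)^{v}$ is a quotient of $\Gal(M/K)^{v}$ for every $v$, so it suffices to bound the ramification of $M/K$. I will use the compatibility of $\theta_K$ with the upper‑numbering ramification filtration, $\theta_K(1+\mK^{n})=\Gal(M/K)^{n}$ for every integer $n\geq 1$ (see, e.g., \cite[Ch.~XV]{Serre}), together with the classical fact that for every integer $s>e/(p-1)$ the exponential and logarithm identify $1+\mK^{s}$ with $\mK^{s}$ in a way that turns the $p$‑th power map into multiplication by $p$, whence $(1+\mK^{s})^{p}=1+\mK^{s+e}$ and so $(1+\mK^{s})^{p^m}=1+\mK^{s+me}$. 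Taking $s_0:=\lfloor e/(p-1)\rfloor+1$, the least integer exceeding $e/(p-1)$, and $n_0:=s_0+me$, we obtain
\[
1+\mK^{n_0}\ =\ \bigl(1+\mK^{s_0}\bigr)^{p^m}\ \subseteq\ (K^\times)^{p^m},
\]
so that $\Gal(M/K)^{n_0}=\theta_K(1+\mK^{n_0})=1$. By the Hasse–Arf theorem the jumps of the upper ramification filtration of the abelian extension $M/K$ occur at integers, hence every such jump is $<n_0$ and therefore $\leq n_0-1=me+\lfloor e/(p-1)\rfloor\leq e\bigl(m+\tfrac{1}{p-1}\bigr)$. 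Thus $M/K$, and a fortiori $L/K$, has ramification bounded by $e_K\bigl(m+\tfrac{1}{p-1}\bigr)$.

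The point requiring care is the threshold $s=e/(p-1)$, which is an integer precisely when $\mu_p\subseteq K$: the $\log$–$\exp$ isomorphism $1+\mK^{s}\cong\mK^{s}$ is available only for $s>e/(p-1)$, so one is forced to work one level above the threshold, and this is exactly what produces the term $\tfrac{1}{p-1}$ in the bound. That some such slack is genuinely needed is already seen for $K=\Qp(\mu_p)$, $m=1$: for a uniformizer $\pi$ of $K$ the cyclic extension $K(\sqrt[p]{\pi})/K$ has its ramification jump at exactly $e_K\tfrac{p}{p-1}$. (Alternatively, one may first reduce to the cyclic case, using that $\Gal(L/K)$ embeds into the product of its cyclic quotients and that the compositum of finitely many Galois extensions each of ramification bounded by $c$ again has ramification bounded by $c$ — because the Galois group of the compositum embeds into the product of the factors — and then run the same class field theory argument for each cyclic factor, which has the virtue of keeping one over $K$ itself so that $e_K$ does not grow.)
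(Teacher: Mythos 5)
Your proof is correct in substance and follows essentially the same route as the paper's: both dispose of the case $p_K\neq p$ by tameness, and in the wild case both combine local class field theory (the reciprocity map carrying $1+\mK^{n}$ onto $G^{n}$) with the $\exp$--$\log$ identity showing that $1+\mK^{n}$ consists of $p^m$-th powers once $n$ exceeds $\eK\left(m+\frac{1}{p-1}\right)$; your only departures are working with the norm group $(K^\times)^{p^m}$ of the maximal abelian exponent-$p^m$ extension instead of reducing to a cyclic totally ramified subextension, and inserting a Hasse--Arf step to round the top break down to the integer $m\eK+\lfloor \eK/(p-1)\rfloor$, which in fact yields a slightly sharper conclusion than the paper's argument literally establishes (the paper only gets $G^{u}=1$ for integers $u>\eK(m+\frac{1}{p-1})$). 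One caveat, which applies equally to the paper's own proof: with the definition ``ramification bounded by $c$'' meaning $G^{c}=1$, your own illustrative example $K=\Qp(\zeta_p)$, $L=K(\pi^{1/p})$ has its upper break \emph{exactly} at $\eK\cdot\frac{p}{p-1}=\eK\left(1+\frac{1}{p-1}\right)$, so $G^{c}\neq 1$ at $c=\eK\left(m+\frac{1}{p-1}\right)$ whenever $(p-1)\mid\eK$; what both arguments actually prove is that every break is $\le c$, i.e.\ $G^{v}=1$ for all $v>c$, which is all that is used in the proof of Theorem~\ref{MT1-sec2}.
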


If $K$ has characteristic $>0$, then we understand that  
$\eK=\infty$, so that the lemma trivially holds true. 

\begin{proof}
If $p_K\not=p$, then 
$L/K$ is tamely ramified and hence it has ramification bounded by $1$. 
Suppose that $K$ has residue characteristic  $p$. 
We may assume that  $L/K$  is totally ramified and cyclic of degree $p^m$. 
Let  $\OK$  denote the valuation ring of  $K$.  
By local class field theory, we have the reciprocity map
\[
\OK^\times\ \to\ \Gal(L/K), 
\]
which maps  $\UKu$  onto  $\Gal(L/K)^u$, 
where   $\UKu\coloneqq1+\mK^u$  for an integer  $u\geq 1$  and  
$\mK$  is the maximal ideal of  $\OK$. 
We claim that, if
$u>\eK(m+1/(p-1))$, then one has
\[
\UKu\ \subset\ (\OK^\times)^{p^m},
\]
so that any cyclic quotient of 
$\Gal(L/K)$  of degree dividing  $p^m$  has maximal ramification break $\leq u$. 
This follows from the equality
\[
1+x\ =\ 
\left(\exp\left(\frac{1}{p^m}\log(1+x)\right)\right)^{p^m},
\]
which holds for  $x\in\OK$  with  $v_K(x)>\eK(m+1/(p-1))$
(see, for example,~\cite[Chapter~II, Proposition~5.5]{Neukirch}).
\end{proof}


Now we prove our first main result.

\begin{theorem}\label{MT1-sec2}
Let $k$ be a number field.
For any integer $g\geq 1$ and any family $\m=(m_p)_{p:\textup{prime}}$ of integers 
$m_p\geq 0$ indexed by prime numbers, 
the extension field
\[
\Kgm \coloneqq k(p^{-m_p}A(k) \mid A, p)
\] 
of  $k$ 
obtained by adjoining all coordinates of the points in 
\[
p^{-m_p}A(k) \coloneqq 
\{P\in A(\kbar)\mid p^{m_p}P\in A(k)\}
\] 
for 
all semiabelian variety  $A$  over  $k$  of dimension $\leq g$  and 
all prime numbers  $p$ 
is highly Kummer-faithful.
\end{theorem}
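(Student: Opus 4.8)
The plan is to reduce the statement, via the criterion of \cite[Theorem~3.3]{OT} (which already shows that the smaller field $\kgm = k(A(\kbar)[p^{m_p}] \mid A,p)$ is highly Kummer-faithful), to a ramification-bounding statement about the extension $\Kgm/\kgm$. Recall from \cite[Definition~2.6]{OT} and the discussion in \cite{OT} that high Kummer-faithfulness of a field $F$ over $k$ can be detected by a local condition: one needs that, for each place $v$ of $k$ (say, lying over a rational prime $p$), the completion-side extensions $F_w/k_v$ have ramification bounded by some constant depending only on $v$ (equivalently, the upper-numbering ramification groups $G^c$ vanish for $c$ large relative to the absolute ramification index $e_{k_v}$). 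So the first step is to spell out this reduction: it suffices to show that, for every place $v$ of $\kgm$ over a prime $p$ and every place $w$ of $\Kgm$ over $v$, the local extension $(\Kgm)_w/(\kgm)_v$ has ramification bounded by a constant depending only on $p$ and on the residue data at $v$.

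The second step is to analyze the structure of $\Kgm/\kgm$. The key observation is that over $\kgm$ all the relevant torsion is already rational: for a semiabelian variety $A/k$ of dimension $\le g$, adjoining a point $P \in p^{-m_p}A(k)$ (i.e. $p^{m_p}P \in A(k)$) to $\kgm$ produces a \emph{Kummer extension} of $\kgm$, because $\kgm$ contains $A[p^{m_p}]$. More precisely, the compositum of all such extensions $\kgm(P)/\kgm$ (as $A$ and $P$ vary, with $p$ fixed) is an abelian extension of $\kgm$ whose Galois group is annihilated by $p^{m_p}$ — it sits inside $\mathrm{Hom}(G_{\kgm}, A(\kbar)[p^{m_p}])$-type groups via the Kummer pairing. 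Thus $\Kgm/\kgm$ is the compositum over all primes $p$ of abelian extensions of exponent $p^{m_p}$.

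The third step is the local ramification estimate, and this is where Lemmas~\ref{lem:ram.bound} and~\ref{lem:conductor} do the work. Fix a place $v$ of $\kgm$ over $p$ with finite residue field; write $e_v = e_{(\kgm)_v}$ for its absolute ramification index. For a prime $p' \ne p$, the $p'$-part of $\Kgm/\kgm$ is tamely ramified at $v$ (by Lemma~\ref{lem:conductor}), hence has ramification bounded by $1$ at $v$. For the prime $p' = p$, the relevant piece of $\Kgm/\kgm$ is abelian of exponent $p^{m_p}$, so by Lemma~\ref{lem:conductor} it has ramification bounded by $e_v(m_p + 1/(p-1))$ at $v$. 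Since $\Kgm/\kgm$ is the compositum of these pieces over all primes, and a compositum of two extensions with ramification bounded by $c$ again has ramification bounded by $c$ (Lemma~\ref{lem:ram.bound}, applied through the finite subextensions, after noting that the finitely many bad primes contributing a $p$-part beyond the tame bound can be absorbed), we conclude that $\Kgm/\kgm$ has ramification bounded at $v$ by $\max\bigl(1,\ e_v(m_p + 1/(p-1))\bigr)$, a constant depending only on $v$. Combined with the known high Kummer-faithfulness of $\kgm$ and the local criterion from Step~1, this finishes the proof.

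The main obstacle I anticipate is the bookkeeping in Step~3: $\kgm$ is an infinite extension of $k$, so its completion at $v$ need not be a finite extension of $\Qp$ and the residue field of $(\kgm)_v$ need not be finite. One must either check that Lemma~\ref{lem:conductor} (which assumes finite residue field) still applies at the relevant places — perhaps by passing to a finite subextension $k' \subseteq \kgm$ over which enough torsion is already defined, bounding ramification there, and then using Lemma~\ref{lem:ram.bound} to propagate — or reformulate the high-Kummer-faithfulness criterion so that it is insensitive to this issue. A secondary point to handle carefully is that $A$ ranges over \emph{all} semiabelian varieties of dimension $\le g$, an infinite family, so one must ensure the ramification bound at a given $v$ is uniform in $A$; but this is automatic once one observes that the bound in Lemma~\ref{lem:conductor} depends only on the exponent $p^{m_p}$ and on $e_v$, not on $A$.
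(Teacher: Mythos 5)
Your overall strategy---reduce via the criterion of \cite[Corollary~2.15]{OT} to a uniform bound on upper ramification breaks, observe that adjoining $p^{-m_p}A(k)$ over a field already containing $A[p^{m_p}]$ gives an abelian (Kummer) extension of exponent $p^{m_p}$, bound its conductor by Lemma~\ref{lem:conductor}, and assemble the pieces with Lemma~\ref{lem:ram.bound} and stability of ramification bounds under composita---is exactly the paper's strategy. The one substantive divergence is your choice of intermediate field: you insert the full infinite extension $\kgm$ between $k$ and $\Kgm$, whereas the paper works one semiabelian variety and one prime at a time, inserting the \emph{finite} extension $K=k(A[p^{m_p}])$ between $k$ and $L=k(p^{-m_p}A(k))$, bounding the breaks of $K/k$ by \cite[Theorem~3.3]{OT} and those of $L/K$ by Lemma~\ref{lem:conductor}, and only then taking the compositum over all $A$ and $p$.

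This difference is not cosmetic. The obstacle you flag at the end is real and, as written, your Step~3 does not go through: at a place of $\kgm$ the residue field is infinite (it contains the residue fields of $k(\mu_{p^{m_p}})$ for all $p$), so Lemma~\ref{lem:conductor}, whose proof is local class field theory over a local field, simply does not apply to $(\Kgm)_w/(\kgm)_v$; moreover Lemma~\ref{lem:ram.bound} as stated requires the intermediate extension $K'/K$ to be \emph{finite} Galois, so it cannot be invoked with $K'=\kgm$ without further justification of what the upper-numbering filtration even means relative to $\kgm$. The repair you sketch---``passing to a finite subextension over which enough torsion is already defined, bounding ramification there, and then using Lemma~\ref{lem:ram.bound} to propagate''---is precisely the paper's proof; once you replace $\kgm$ by $k(A[p^{m_p}])$ for each individual $A$ and $p$, every field in sight is a finite extension of $k$, the residue fields are finite, $e_{K,w}$ is bounded in terms of $k$, $v$, $g$, $m_p$ alone, and the argument closes. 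So the proposal identifies all the right ingredients, but in its stated form the key local estimate is applied to a field where it is not valid; you need to carry out the finite-subextension reduction rather than leave it as an anticipated difficulty.
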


\begin{proof}
By~\cite[Corollary~2.15]{OT}, 
it is enough to show that the Galois extension 
$\Kgm/k$  has finite maximal ramification break everywhere. 
Since the upper bound of the 
upper ramification break is preserved by 
composition of fields (\cite[Chapter~IV, Proposition~14]{Serre}; 
see also~\cite[Lemma~3.1]{OT}), 
it is enough to show that, for each finite place  $v$  of  $k$,  
there is a constant  $c_v=c_v(g,\m)$  depending only on  
$k$, $v$, $g$, and $\m$ 
such that, for any semiabelian variety  $A$  over  $k$  
of dimension $\leq g$  and any prime number  $p$, 
the Galois extension  
$k(p^{-m_p}A(k))/k$  has upper ramification break  $\leq c_v$ 
at any extension  $w$  of  $v$  to $k(p^{-m_p}A(k))$. 
Set
$L=k(p^{-m_p}A(k))$ and 
$K=k(A[p^{m_p}])$. 
By Lemma~\ref{lem:ram.bound}, 
it is enough to show that the maximal upper ramification breaks of  
$L/K$  and  $K/k$  at  $w$  are both  $\leq c$. 
Such a bound, say $\cvOT$, is given for  $K/k$  by~\cite[Theorem~3.3]{OT}. 
For the abelian extension 
$L/K$  of exponent  $\leq p^{m_p}$, 
Lemma~\ref{lem:conductor} assures that 
it has ramification bounded by 1 if  $w\nmid p$  and 
by  $e_{K,w}(m_p+1/(p-1))$  if  $w\mid p$, 
where  $e_{K,w}$  denotes the absolute ramification index of  $K$  at  $w$.
Note that, if $w\mid p$,  the absolute ramification index 
$e_{K,w}$  is bounded by a constant  $e_v$  depending only on 
$k$, $v$, $g$, and $m_p$. 
Now we obtain a desired upper bound by setting
\[
c_v \coloneqq 
\max\left\{\cvOT, e_v\left(m_p+\frac{1}{p-1}\right)\right\}.
\]
The proof of the theorem is completed.
\end{proof}

\begin{proposition}\label{prop:not free-sec2}
Let  $A$  be a semiabelian variety over  $k$  of dimension $\leq g$. 
If  $A(k)$  has a non-torsion point (i.e., if  $\rank(A(k))\geq 1$)  and  
$m_p\geq 1$  for infinitely many primes  $p$, then
the Mordell--Weil group  $A(\Kgm)$  modulo the 
torsion subgroup  $A(\Kgm)_\tor$  is {\rm not} free. 
In particular, it is not finitely generated. 
\end{proposition}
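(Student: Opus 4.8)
The plan is to produce a single nonzero element of $M \coloneqq A(\Kgm)/A(\Kgm)_\tor$ that is divisible by infinitely many distinct primes, and then to observe that no free $\Z$-module can contain such an element.

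First, I would fix a non-torsion point $P \in A(k)$, which exists by the hypothesis $\rank(A(k)) \ge 1$. Our $A$ has dimension $\le g$, so it is among the semiabelian varieties used to build $\Kgm$. For each prime $p$ with $m_p \ge 1$, since $A$ is a semiabelian variety over the characteristic-zero field $k$, multiplication by $p^{m_p}$ is an isogeny, hence surjective on $A(\kbar)$; thus there is a point $R_p \in A(\kbar)$ with $p^{m_p} R_p = P$. Then $R_p \in p^{-m_p}A(k)$, so by the very definition of $\Kgm$ all coordinates of $R_p$ lie in $\Kgm$, i.e.\ $R_p \in A(\Kgm)$. Setting $S_p \coloneqq p^{m_p - 1} R_p \in A(\Kgm)$ gives $p S_p = P$. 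Writing $\bar P$ for the image of $P$ in $M$, the point $\bar P$ is nonzero because $P$ is non-torsion in $A(k) \subseteq A(\Kgm)$, and $p \bar S_p = \bar P$ shows that $\bar P$ is divisible by $p$ in $M$ for every one of the infinitely many primes $p$ with $m_p \ge 1$.

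Next, I would record the elementary fact that a free $\Z$-module $F$ has no nonzero element divisible by infinitely many primes: any $x \in F$ is a $\Z$-combination $x = a_1 e_1 + \dots + a_n e_n$ of finitely many basis vectors, with some $a_i \ne 0$ if $x \ne 0$; a prime $p$ divides $x$ in $F$ only if $p$ divides $d \coloneqq \gcd(a_1, \dots, a_n)$, and $d$ is a nonzero integer, hence has only finitely many prime divisors. Applying this to $M$ and $\bar P$ yields a contradiction, so $M$ is not free. Finally, for the last assertion, $M$ is torsion-free by construction, so if it were finitely generated it would be free by the structure theorem for finitely generated abelian groups — contradicting what was just shown; hence $M$ is not finitely generated.

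There is no serious obstacle in this argument; the only point requiring a moment of care is that the $p$-divisibility of $P$ inside $A(\Kgm)$ is not lost upon passing to the torsion-free quotient, and that $\bar P$ itself does not vanish there — both of which are immediate, the former from $p \bar S_p = \bar P$ and the latter from $P$ being non-torsion.
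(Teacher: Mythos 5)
Your proposal is correct and follows essentially the same route as the paper: exhibit the image of a non-torsion $P\in A(k)$ in $A(\Kgm)/A(\Kgm)_\tor$ as a nonzero element divisible by infinitely many integers (the paper uses $p^{m_p}$, you use the primes $p$ themselves), and then invoke the elementary fact that a free $\Z$-module has no such element (the paper isolates this as Lemma~\ref{lem:not free}, proved by the same coordinate/gcd argument you give). The only cosmetic difference is that the paper states the lemma for arbitrary infinitely many divisors $d_j$ rather than primes; your deduction of non-finite-generation from torsion-freeness matches the intended reading as well.
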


This follows from the following lemma, since 
if  $P\in A(k)$  is a non-torsion point, then it is divisible by  
$p^{m_p}$ ($>1$) in  $A(\Kgm)$  for infinitely many  $p$.

\begin{lemma}\label{lem:not free}
Let  $M$  be a $\Z$-module.
If there exist a non-zero element  $x$  of  $M$  and 
infinitely many positive integers  $d_1,d_2,\dots$  such that  
$x$  is divisible by  $d_j$  in  $M$  
(i.e., there exists a  $y_j\in M$  such that  $d_jy_j=x$) for all  $j\geq 1$,  
then  $M$  is not free.
\end{lemma}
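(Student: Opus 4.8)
The plan is to show that if $M$ were free, then no nonzero element could be infinitely divisible, contradicting the hypothesis. So suppose for contradiction that $M$ is free, say $M \cong \bigoplus_{i \in I} \Z e_i$ for some index set $I$. Take the given nonzero element $x \in M$ and write it in coordinates: $x = \sum_{i \in I} a_i e_i$ with only finitely many $a_i$ nonzero and at least one, say $a_{i_0}$, nonzero. The key observation is that the coordinate projection $\pi_{i_0} \colon M \to \Z$ sending $\sum a_i e_i \mapsto a_{i_0}$ is a group homomorphism, so divisibility is preserved: if $d_j y_j = x$ in $M$, then $d_j \pi_{i_0}(y_j) = \pi_{i_0}(x) = a_{i_0}$ in $\Z$, so $d_j \mid a_{i_0}$ for every $j \geq 1$.

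Now the contradiction is immediate from the structure of $\Z$: a nonzero integer $a_{i_0}$ has only finitely many divisors, so it cannot be divisible by each of the infinitely many distinct positive integers $d_1, d_2, \dots$. (If the $d_j$ are not assumed distinct one can still extract an unbounded subsequence, or simply note that there are infinitely many of them as a set unless some value repeats forever, in which case we may discard repetitions; in the intended application the $d_j = p^{m_p}$ for distinct primes $p$ are automatically distinct and even coprime.) This forces $a_{i_0} = 0$, contradicting the choice of $i_0$, and hence $M$ is not free.

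I would then spell out how Lemma~\ref{lem:not free} yields Proposition~\ref{prop:not free-sec2}: take $M = A(\Kgm)/A(\Kgm)_\tor$, and let $x$ be the image in $M$ of a non-torsion point $P \in A(k)$. Since $m_p \geq 1$ for infinitely many $p$, the field $\Kgm$ contains a point $Q_p$ with $p^{m_p} Q_p = P$ by the very definition of $\Kgm$ (as it adjoins the coordinates of all points of $p^{-m_p}A(k)$), so $P$ — and hence $x$ — is divisible by $p^{m_p} > 1$ in $A(\Kgm)$, hence in $M$. The element $x$ is nonzero in $M$ because $P$ is non-torsion. Taking $d_j$ to run over the infinitely many values $p^{m_p}$ (for the infinitely many $p$ with $m_p \geq 1$), Lemma~\ref{lem:not free} applies and shows $M$ is not free; since a finitely generated torsion-free $\Z$-module is free, $M$ is in particular not finitely generated.

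There is no real obstacle here — the argument is elementary. The only point requiring a modicum of care is the bookkeeping around whether the $d_j$ need to be distinct: the cleanest formulation is to observe that $a_{i_0}$ being a fixed nonzero integer, the set $\{d_j : j \geq 1\}$ must be contained in the finite set of divisors of $|a_{i_0}|$, so it cannot contain infinitely many elements and cannot contain arbitrarily large ones; either way this contradicts ``infinitely many positive integers $d_1, d_2, \dots$'' once one reads that phrase as producing an infinite set of values (which is harmless, and certainly holds in the application). I would state the lemma's proof in the divisor-counting form to keep it airtight.
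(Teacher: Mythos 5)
Your proof is correct and is essentially the paper's own argument: assume a basis, expand $x$ in coordinates, and observe that a fixed nonzero coefficient cannot be divisible by infinitely many distinct positive integers. The extra care you take about distinctness of the $d_j$ and the deduction of Proposition~\ref{prop:not free-sec2} match what the paper does (the latter appears as the remark preceding the lemma).
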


\begin{proof}
Suppose that  $M$  is free and choose a basis 
$(m_i)_{i\in I}$  of it.
Let  $x\in M\smallsetminus\{0\}$  and  $d_1,d_2,\dots$  be as above. 
Write  
$x=\sum_{i\in I}a_im_i$  with  $a_i\in\Z$. 
Then  $x$  is divisible by  $d_j$  in  $M$  if and only if  
$a_i$  is divisible by  $d_j$  for all  $i\in I$. 
If this holds for infinitely many  $d_j$, 
then all  $a_i$  are zero. 
This is a contradiction, and hence  $M$  is not free.
\end{proof}

\begin{remark}
\begin{enumerate}[\textup{(\arabic{enumi})}]
    \item A $\Z$-module $M$  as in Lemma~\ref{lem:not free} 
    (and hence  $A(\Kgm)$  modulo torsion in Proposition~\ref{prop:not free-sec2})
    is even {\it not projective} by~\cite[Corollary~4.5]{Bass}.
    \item 
    In general, we say that the Mordell--Weil group  $A(K)$  
    over a field  $K$  has \textit{infinite rank} if 
    $\dim_\Q(A(K)\otimes_\Z\Q)=\infty$. 
    If $\rank(A(k))=0$, in many cases, 
    $A(\Kgm)$  and $A(\kgm)$  
    are known to have infinite rank. 
    For example, this is the case for both 
    $A(\Kgm)$ and $A(\kgm)$ if 
    $A$ is the Jacobian variety of a hyperelliptic curve of genus $\leq g$  defined over  $k$  and 
    $m_p\geq 1$  for infinitely many  $p$
    (cf.~\cite[Theorem~1]{Moon09}).
\end{enumerate}
\end{remark}

\section{Known results for our second type of extensions}
\label{sec-preliminaries}

Let $K$ be a field.
Let $\zeta_n$ denote a primitive $n$-th root of unity in $\overline{K}$ for any positive integer $n$.
For any $\mathbb{Z}$-module $M$, denote by $M_\divisible$ the submodule of divisible elements in $M$, i.e., $M_\divisible = \bigcap_{n = 1}^{\infty} n M$.
For later use in proving our results, let us summarize what has been found on the structure of the Mordell--Weil groups of semiabelian varieties over $\overline{K}(\sigma)$ and over $\overline{K}[\sigma]$.

\begin{theorem}\label{knownresultforKbarsigma}
Let $K$ be a finitely generated field over its prime field and $e$ a positive integer.
\begin{enumerate}[\textup{(\arabic{enumi})}]
    \item\textup{(Geyer--Jarden~\cite[Theorem~2.4]{GJ06} (resp.\ Frey--Jarden~\cite[Theorem~9.1]{FreyJ}))} Assume that $K$ is an infinite field.
    Then for almost all $\sigma \in G_K^e$ and any abelian variety $A$ of positive dimension over $\overline{K}[\sigma]$ (resp.\ $\overline{K}(\sigma)$), the group $A(\overline{K}[\sigma])$ (resp.\ $A(\overline{K}(\sigma))$) has rank~$\aleph_0$.
    \item\textup{(Jarden~\cite[Theorems~8.1 and 8.2]{Jarden75})}
    \begin{enumerate}[\textup{(\arabic{enumi}-\roman{enumii})}]
        \item Assume $e = 1$.
        For almost all $\sigma \in G_K$ and any positive integer $d$, there exist infinitely many prime numbers $\ell$ such that $[\overline{K}(\sigma)(\zeta_\ell) : \overline{K}(\sigma)] = d$.
        In particular, for almost all $\sigma \in G_K$ and any finite extension $M$ of $\overline{K}(\sigma)$, there exist infinitely many roots of unity contained in $M$ and a prime number $\ell$ such that $\zeta_\ell \notin M$.
        \item Assume $e \ge 2$.
        Then for almost all $\sigma \in G_K^e$ and any positive integer $d$, there exist only finitely many positive integers $n$ not divisible by the characteristic of $K$ such that $[\overline{K}(\sigma)(\zeta_n) : \overline{K}(\sigma)] \le d$.
        In particular, for almost all $\sigma \in G_K^e$ and any finite extension $M$ of $\overline{K}(\sigma)$, there are only finitely many roots of unity contained in $M$.
    \end{enumerate}
    \item Consider the following statements on $K$:
    \begin{enumerate}[\textup{(\alph{enumii})}]
        \item Assume $e = 1$.
        For almost all $\sigma \in G_K$ and any abelian variety $A$ of positive dimension over $\overline{K}(\sigma)$, the group $A(\overline{K}(\sigma))_\tor$ is infinite.
        Moreover, there exist infinitely many prime numbers $\ell$ such that $A(\overline{K}(\sigma))[\ell] \neq 0$.
        \item Assume $e \ge 2$.
        For almost all $\sigma \in G_K^e$ and any abelian variety $A$ over $\overline{K}(\sigma)$, the group $A(\overline{K}(\sigma))_\tor$ is finite.
        \item For almost all $\sigma \in G_K^e$, any abelian variety $A$ over $\overline{K}(\sigma)$, and any prime number $\ell$, the group $A(\overline{K}(\sigma))[\ell^\infty] = \bigcup_{i = 1}^{\infty} A(\overline{K}(\sigma))[\ell^i]$ is finite.
    \end{enumerate}
    Then these statements hold in the following situations:
    \begin{enumerate}[\textup{(\arabic{enumi}-\roman{enumii})}]
    \item\textup{(Geyer--Jarden~\cite[Theorem~1.1]{GJ78})} Replace ``any abelian variety" in each statement with ``any elliptic curve".
    Then Statements~\textup{(a)--(c)} hold for any $K$.
    \item\textup{(Jacobson--Jarden~\cite[Proposition~4.2]{JJ84})} Statements~\textup{(a)--(c)} hold if $K$ is a finite field.
    \item\textup{(Jacobson--Jarden~\cite[Main Theorem~(b), (a)]{JJ01} (proved (b) and (c)), Zywina~\cite[Theorem~1.1]{Zywina16} (proved (a) for the number field case), Jarden--Petersen~\cite[Theorem~C]{JP19} (proved (a) for the general case))} Statements~\textup{(a)--(c)} hold if $K$ has characteristic zero.
    \item\textup{(Jacobson--Jarden~\cite[Main Theorem~(a)]{JJ01})} Statement~\textup{(c)} holds for any $K$.
    \end{enumerate}
    \item\textup{(Jarden--Petersen~\cite[Theorem~1.3 (ii)]{JP22})} Assume that $K$ has characteristic zero and $e \ge 2$.
    Then for almost all $\sigma \in G_K^e$, any finite extension $M$ of $\overline{K}(\sigma)$, and any abelian variety $A$ over $M$, it holds that $A(M)_\divisible = 0$.
\end{enumerate}
\end{theorem}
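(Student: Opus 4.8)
The plan is essentially bibliographic: Theorem~\ref{knownresultforKbarsigma} assembles, in the uniform notation fixed above, results that are already available in the literature, so the ``proof'' consists of matching each item to its source and carrying out the handful of elementary deductions needed to bring each source into the stated form. I would proceed item by item.

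For~(1), Geyer--Jarden~\cite[Theorem~2.4]{GJ06} gives the assertion for $\overline{K}[\sigma]$ and Frey--Jarden~\cite[Theorem~9.1]{FreyJ} for $\overline{K}(\sigma)$; both are stated for an infinite field $K$ finitely generated over its prime field, exactly our hypothesis, so nothing beyond citation is needed, once one checks that ``rank'' there means $\dim_{\mathbb{Q}}(A(\cdot)\otimes_{\mathbb{Z}}\mathbb{Q})$. For~(2), the two displayed statements about the degrees $[\overline{K}(\sigma)(\zeta_n):\overline{K}(\sigma)]$ are Jarden~\cite[Theorems~8.1 and 8.2]{Jarden75}; the ``in particular'' clauses follow at once from the remark that $\zeta_n\in M$ forces $\overline{K}(\sigma)(\zeta_n)\subseteq M$, hence $[\overline{K}(\sigma)(\zeta_n):\overline{K}(\sigma)]\le[M:\overline{K}(\sigma)]$, so for $e\ge2$ only finitely many $n$ give $\zeta_n\in M$, while for $e=1$ taking $d=1$ and then $d>[M:\overline{K}(\sigma)]$ yields, respectively, infinitely many roots of unity in $M$ and a prime $\ell$ with $\zeta_\ell\notin M$. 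For~(4), the statement is Jarden--Petersen~\cite[Theorem~1.3 (ii)]{JP22} essentially verbatim.

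The main work is in~(3). I would first record that (a), (b), (c) are three templates concerning the torsion of abelian varieties over $\overline{K}(\sigma)$, and then attribute each regime: the elliptic-curve specialization of (a)--(c) over an arbitrary finitely generated $K$ is Geyer--Jarden~\cite[Theorem~1.1]{GJ78}; the finite-field case is Jacobson--Jarden~\cite[Proposition~4.2]{JJ84}; the characteristic-zero case is obtained by combining Jacobson--Jarden~\cite[Main Theorem]{JJ01} (which supplies (b), (c), and part of (a)) with Zywina~\cite[Theorem~1.1]{Zywina16} (which settles (a) for number fields) and Jarden--Petersen~\cite[Theorem~C]{JP19} (which settles (a) in general); and (c) over an arbitrary $K$ is again Jacobson--Jarden~\cite[Main Theorem (a)]{JJ01}. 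The only real care here is to track exactly which of (a), (b), (c) each reference delivers --- notably that (a) for general abelian varieties in characteristic zero was open until~\cite{JP19} --- and to confirm that each cited statement is already of the ``for almost all $\sigma\in G_K^e$, for every $A$'' type. If a source is stated in the weaker form ``for every $A$, for almost all $\sigma$'', one upgrades it for free, using that a field $K$ finitely generated over its prime field admits only countably many abelian varieties up to isomorphism, so that the relevant exceptional sets form a countable union of $\mu$-null sets.

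I do not expect a genuine obstacle here, since nothing in Theorem~\ref{knownresultforKbarsigma} is proved rather than imported; the only places an argument appears at all --- the ``in particular'' deductions in~(2) and the countability upgrade above --- are routine. What requires attention, rather than ingenuity, is bookkeeping: aligning the hypotheses ``finitely generated over the prime field'' versus ``finitely generated over $\mathbb{Q}$'', ``$e=1$'' versus ``$e\ge2$'', and ``$\overline{K}(\sigma)$'' versus ``$\overline{K}[\sigma]$'' with each cited result, and checking that the measure used in each source coincides with the normalized Haar measure on $G_K^e$ fixed in the Introduction.
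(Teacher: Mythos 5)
Your proposal matches the paper exactly: the paper gives no proof of Theorem~\ref{knownresultforKbarsigma} beyond the citations embedded in its statement, treating it purely as a survey of the literature, and the only genuine content in your write-up (the ``in particular'' deductions in~(2) and the countability upgrade over the countably many abelian varieties defined over a countable field) is routine and correct. Nothing further is needed.
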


The results, including ours, involving the normalized Haar measure are non-constructive as they are proved by calculating the measures of various subsets in $G_K^e$.
Thus they do not give an explicit element for which their statement holds.
Larsen~\cite{Larsen03} conjectured that the statement on the field $\overline{K}(\sigma)$ in (1) holds for \textit{any} $\sigma \in G_K^e$.
He and Im~\cite[Theorem~1.4]{IL08} gave an affirmative answer to this conjecture under the assumptions that $K$ is of characteristic different from two and $e = 1$.
Im with collaborators~\cite{BI, IL13sa, CI} has obtained other results concerning Larsen's conjecture.
See also the survey~\cite{IL21} by Im and Larsen.

Geyer and Jarden~\cite{GJ78} conjectured that Statements (a)--(c) in (3) in this theorem hold for any finitely generated field $K$ over its prime field.
We note that the paper of Jacobson and Jarden~\cite{JJ84} involves a proof of Statement~(a) for $K$ with positive characteristic, but it contains an error as indicated in~\cite{JJ85}.
Statements~(a) and (b) for $K$ which is infinite and has positive characteristic remain open.

We can extend (3-iii) in Theorem~\ref{knownresultforKbarsigma} to finite extensions of $\overline{K}(\sigma)$.
For the convenience of applying this theorem in the later sections, we describe the assertion of this theorem again.

\begin{theorem}\label{torptsoffinextofKbarsigma}
Let $K$ be a finitely generated field over $\mathbb{Q}$ and $e$ a positive integer.
\begin{enumerate}[\textup{(\arabic{enumi})}]
    \item Assume $e \ge 2$.
    For almost all $\sigma \in G_K^e$, any finite extension $M$ of $\overline{K}(\sigma)$, and any abelian variety $A$ over $M$, the group $A(M)_\tor$ is finite.
    \item For almost all $\sigma \in G_K^e$, any finite extension $M$ of $\overline{K}(\sigma)$, any abelian variety $A$ over $M$, and any prime number $\ell$, the group $A(M)[\ell^\infty]$ is finite.
\end{enumerate}
\end{theorem}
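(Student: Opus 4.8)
The plan is to deduce both assertions from the $M = \overline{K}(\sigma)$ case — that is, from Statements~(b) and~(c) of Theorem~\ref{knownresultforKbarsigma}~(3), valid in situation~(3-iii) since $K$ has characteristic zero — by means of Weil restriction of scalars. There is nothing to do for the field $\overline{K}(\sigma)$ itself; the only content is the passage to an arbitrary finite extension $M$, and Weil restriction is the standard device for that.

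Concretely, I would fix a finite extension $M$ of $\overline{K}(\sigma)$ and an abelian variety $A$ over $M$. Since $\mathrm{char}\,K = 0$, the extension $M/\overline{K}(\sigma)$ is finite separable, so the Weil restriction $B \coloneqq \mathrm{Res}_{M/\overline{K}(\sigma)}(A)$ exists and is again an abelian variety over $\overline{K}(\sigma)$, of dimension $[M : \overline{K}(\sigma)]\cdot\dim A$, and the defining universal property gives a functorial group isomorphism $B(\overline{K}(\sigma)) \cong A(M)$. In particular $B(\overline{K}(\sigma))_\tor \cong A(M)_\tor$ and $B(\overline{K}(\sigma))[\ell^\infty] \cong A(M)[\ell^\infty]$ for every prime $\ell$. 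For part~(1), where $e \ge 2$, Statement~(b) provides a measure-zero set $N \subseteq G_K^e$ such that $B(\overline{K}(\sigma))_\tor$ is finite for every $\sigma \notin N$ and \emph{every} abelian variety $B$ over $\overline{K}(\sigma)$; applying this to the $B$ above shows $A(M)_\tor$ is finite for every $\sigma \notin N$, every finite $M/\overline{K}(\sigma)$, and every $A/M$. Part~(2) follows by the identical argument with Statement~(c) — which imposes no condition on $e$ — in place of~(b). One then takes $N$ to be the union of the (finitely many) exceptional sets involved, which is again null.

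Because Theorem~\ref{knownresultforKbarsigma}~(3) is already phrased with the quantifier ``for almost all $\sigma$ and \emph{any} abelian variety over $\overline{K}(\sigma)$'', the argument is essentially formal once Weil restriction is in hand, and I do not expect a genuine obstacle — only a point to state carefully: the variety $B$ is assembled from data living over the large field $M$, so one must note that it is nonetheless an abelian variety \emph{over} $\overline{K}(\sigma)$ and hence lies within the scope of the cited result. This is true by construction, and under the hood it rests on the same mechanism that makes those results meaningful, namely that every abelian variety over $\overline{K}(\sigma)$ is defined over a finite subextension of $\overline{K}/K$ (using $K \subseteq \overline{K}(\sigma) \subseteq \overline{K}$), of which there are only countably many, so that a single exceptional null set does indeed serve for all of them; this is already subsumed in the statement of Theorem~\ref{knownresultforKbarsigma}, so no further work is needed here.
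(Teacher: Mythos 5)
Your proof is correct and is essentially identical to the paper's: the authors likewise form the Weil restriction $B = \mathrm{Res}_{M/\overline{K}(\sigma)}(A)$ (citing~\cite[Lemma~6.1]{JP22}), use $A(M) \cong B(\overline{K}(\sigma))$, and conclude from Statements~(b) and~(c) of Theorem~\ref{knownresultforKbarsigma}~(3) in situation~(3-iii).
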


\begin{proof}
Let $A$ be an abelian variety over a finite extension $M$ of $\overline{K}(\sigma)$.
Using Weil restriction for abelian varieties~\cite[Lemma~6.1]{JP22}, we know that $B = \mathrm{Res}_{M / \overline{K}(\sigma)}(A)$ is an abelian variety over $\overline{K}(\sigma)$ and $A(M) \cong B(\overline{K}(\sigma))$.
Hence the theorem follows from (3-iii) in Theorem~\ref{knownresultforKbarsigma}.
\end{proof}

The next proposition provides a convenient criterion for a perfect field to be Kummer-faithful.

\begin{proposition}\label{CharacterizationOfKFfields}
A perfect field $K$ is Kummer-faithful if and only if $\Gm(L)_\divisible = 0$ for any finite extension $L$ of $K$ (i.e., $K$ is torally Kummer-faithful) and $A(K)_\divisible = 0$ for any abelian variety $A$ over $K$.
\end{proposition}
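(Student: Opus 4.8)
The ``only if'' direction is immediate: if $K$ is Kummer-faithful then $\Gm(L)_\divisible=0$ for every finite extension $L/K$ since $\Gm$ is a torus (hence semiabelian), and $A(K)_\divisible=0$ for every abelian variety $A/K$ by taking $L=K$ in the definition; the equivalence of the condition ``$\Gm(L)_\divisible=0$ for all finite $L/K$'' with toral Kummer-faithfulness will also fall out along the way. So the content is the converse, which I would prove in three steps: reduce to the base field $K$ by Weil restriction; upgrade the toral hypothesis to all tori over $K$; and then handle a general semiabelian $G/K$ by peeling off the torus part, the one non-formal input being a divisibility claim in the torus part.

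Step~1. Given a finite extension $L/K$ and a semiabelian variety $G/L$ with defining exact sequence $1\to T\to G\to A\to 1$, Weil restriction along the finite separable (as $K$ is perfect) extension $L/K$ produces an exact sequence of the same shape over $K$ in which $\mathrm{Res}_{L/K}(T)$ is a torus, $\mathrm{Res}_{L/K}(A)$ is an abelian variety, $\mathrm{Res}_{L/K}(G)$ is semiabelian, and $\mathrm{Res}_{L/K}(G)(K)\cong G(L)$; hence it suffices to prove $G(K)_\divisible=0$ for every semiabelian $G/K$. Step~2. If $\Gm(L)_\divisible=0$ for all finite $L/K$, then $T(K)_\divisible=0$ for every torus $T/K$: choosing a finite Galois splitting field $L'/K$ of $T$, a surjection of $\Gal(L'/K)$-modules $\Z[\Gal(L'/K)]^{\,r}\twoheadrightarrow X^*(T)$ dualizes to a closed immersion $T\hookrightarrow\mathrm{Res}_{L'/K}(\Gm)^{\,r}$, so on $K$-points $T(K)\hookrightarrow\Gm(L')^{\,r}$, and since divisibility inside a subgroup implies divisibility in the ambient group, $T(K)_\divisible\hookrightarrow(\Gm(L')_\divisible)^{\,r}=0$. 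Running this over all finite extensions of $K$ also yields the missing half of the claimed equivalence with toral Kummer-faithfulness.

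Step~3. Fix a semiabelian $G/K$ with torus part $T$ and abelian quotient $\pi\colon G\to A$, and take $x\in G(K)_\divisible$. Then $\pi(x)\in nA(K)$ for all $n$, so $\pi(x)\in A(K)_\divisible=0$ and $x\in T(K)$; it remains to prove the key claim $T(K)\cap G(K)_\divisible\subseteq T(K)_\divisible$, which with Step~2 finishes the proof. For the claim I would fix $t$ in the left-hand side and $n\ge 1$ and aim to show $t\in nT(K)$. Pick $z^{(k)}\in G(K)$ with $n^{k}z^{(k)}=t$ for all $k\ge 1$, and set $z=z^{(1)}$, $a=\pi(z)$; since $n\pi(z)=\pi(t)=0$ we have $a\in A(K)[n]$, and from $n\bigl(n^{k-1}z^{(k)}-z\bigr)=n^kz^{(k)}-nz=0$ we get $n^{k-1}z^{(k)}-z\in G(K)[n]$, hence $a\equiv n^{k-1}\pi(z^{(k)})\pmod{S}$ for every $k$, where $S:=\pi(G(K)[n])$ is a subgroup of the finite group $A(K)[n]$. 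By the pigeonhole principle some fixed $s\in S$ realizes this congruence for infinitely many $k$, so $a-s\in n^{j}A(K)$ for all $j$; since $a-s$ is $n$-torsion and an $n$-torsion element of $\bigcap_j n^jA(K)$ is divisible, $a-s\in A(K)_\divisible=0$. Thus $a=\pi(z)\in\pi(G(K)[n])$, so there is $w\in G(K)[n]$ with $\pi(w)=\pi(z)$; then $z-w\in T(K)$ and $n(z-w)=t$, so $t\in nT(K)$. As $n$ was arbitrary, $t\in T(K)_\divisible=0$.

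The genuinely non-formal point is this key claim: divisibility of $t$ in the larger group $G(K)$ does not by itself force divisibility in $T(K)$, since lifting an $n$-torsion point of $A$ to an $n$-torsion point of $G$ over $K$ carries a cohomological obstruction in $H^1(K,T[n])$ that need not vanish for any single $n$. The argument circumvents it by using divisibility of $t$ by all powers $n^k$ at once, together with finiteness of $A[n]$ and the hypothesis $A(K)_\divisible=0$, to push the obstructing class into $S=\pi(G(K)[n])$. The only slightly fiddly sub-point is the elementary fact used at the end, that an $n$-torsion element of $\bigcap_j n^jA(K)$ is divisible: writing any $m$ as $m_1m_2$ with $m_1$ supported on the primes dividing $n$ and $\gcd(m_2,n)=1$, the element is divisible by $m_1$ (as $m_1$ divides a power of $n$) and by $m_2$ (as $m_2$ acts invertibly on the finite cyclic group it generates), and one recombines using $\gcd(m_1,m_2)=1$.
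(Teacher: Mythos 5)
Your proof is correct. A caveat on the comparison you asked for: the paper does not give its own argument here --- it simply cites Ozeki--Taguchi \cite[Proposition~2.3]{OT} --- so there is no in-paper proof to measure yours against; your write-up stands as a legitimate self-contained substitute. The two reductions are standard and correctly executed: Weil restriction along the finite separable extension $L/K$ turns the condition over $L$ into one over $K$ (and in particular the hypothesis $A(K)_\divisible=0$ for abelian varieties over $K$ alone already covers $\mathrm{Res}_{L/K}(A)$), and the surjection $\Z[\Gal(L'/K)]^{\,r}\twoheadrightarrow X^*(T)$ of Galois lattices does give a closed immersion $T\hookrightarrow\mathrm{Res}_{L'/K}(\Gm)^{\,r}$, reducing all tori to the $\Gm$ hypothesis. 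The genuinely non-formal point is, as you identify, the inclusion $T(K)\cap G(K)_\divisible\subseteq T(K)_\divisible$, and your argument for it checks out: the relations $n^{k-1}z^{(k)}-z\in G(K)[n]$ give $a-n^{k-1}\pi(z^{(k)})\in\pi(G(K)[n])$ for every $k$, pigeonhole over the finite group $A(K)[n]$ puts $a-s$ in $\bigcap_j n^jA(K)$ for a single $s\in\pi(G(K)[n])$, and the elementary fact that an $n$-torsion element of $\bigcap_j n^jA(K)$ lies in $A(K)_\divisible$ is correctly justified (divide by the $n$-coprime part of $m$ using an inverse of it modulo $n$, then by the remaining part inside $n^jA(K)$). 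This forces $a\in\pi(G(K)[n])$, hence $t\in nT(K)$ for every $n$, and Step~2 finishes. I see no gap.
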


\begin{proof}
See~\cite[Proposition~2.3]{OT}.
\end{proof}

\section{Freeness of Mordell--Weil groups modulo torsion}
\label{sec-FreenessOfMWGmodTor}

The aim of this section is to prove the freeness result for the Mordell--Weil groups over finite extensions of $\overline{K}[\sigma]$ modulo torsion.
We note that, because these fields are countable, the rank of the Mordell--Weil groups over such a field has infinite rank if and only if it has rank $\aleph_0$.

\begin{theorem}\label{mythm-FreenessOfMWGmodTor}
Suppose that $K$ is a finitely generated field over $\mathbb{Q}$ and $e \ge 2$.
Then, for almost all $\sigma \in G_K^e$, the following statement holds: for any finite extension $L$ of $\overline{K}[\sigma]$ and any semiabelian variety $A$ of positive dimension over $L$, the group $A(L) / A(L)_\tor$ is a free $\mathbb{Z}$-module of rank $\aleph_0$.
\end{theorem}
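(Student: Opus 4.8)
The plan is to reduce the statement to two independent facts: (i) the group $A(L)/A(L)_\tor$ is free of countable rank whenever it has \emph{trivial divisible part}, and (ii) for almost all $\sigma$ this divisible part does indeed vanish for every finite extension $L$ of $\overline{K}[\sigma]$ and every semiabelian $A$. For (ii) I would first recall that $\overline{K}[\sigma]\subseteq \overline{K}(\sigma)$, so any finite extension $L$ of $\overline{K}[\sigma]$ embeds in a finite extension $M$ of $\overline{K}(\sigma)$; by Theorem~\ref{knownresultforKbarsigma}~(4) (the Jarden--Petersen divisibility result, applicable since $K$ is of characteristic zero and $e\ge 2$) we get $A(M)_\divisible=0$ for almost all $\sigma$, hence $A(L)_\divisible=0$ as well for the same $\sigma$. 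A routine diagonal/countability argument (there are only countably many pairs $(L,A)$ up to isomorphism over the countable field $\overline{K}[\sigma]$, and a countable union of measure-zero sets is measure zero) lets us pass from ``for each $A$, almost all $\sigma$'' to ``for almost all $\sigma$, for all $A$''. The rank being exactly $\aleph_0$ (rather than merely infinite) is immediate because $\overline{K}[\sigma]$ is countable; that it is at least $\aleph_0$ — i.e.\ that $A(L)$ has infinite rank — is supplied by Theorem~\ref{knownresultforKbarsigma}~(1) (Geyer--Jarden), again for almost all $\sigma$, after reducing to $\overline{K}[\sigma]$ itself via Weil restriction of $A$ from $L$ down to $\overline{K}[\sigma]$.

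The heart of the matter is then step (i): showing that a finitely-many-torsion-free quotient of a Mordell--Weil group with trivial divisible part is actually \emph{free}. For this I would invoke the structure theory of abelian groups of the relevant size. Since $M=A(L)/A(L)_\tor$ is a countable torsion-free abelian group, by Pontryagin's criterion it suffices to check that every finitely generated subgroup of $M$ is contained in a \emph{free} (equivalently, in a finitely generated) direct summand; equivalently, one shows $M$ has no elements that are ``infinitely divisible along a chain'' of the kind ruled out by Lemma~\ref{lem:not free}, and more — one needs a genuine completeness/coherence statement, not just the non-existence of divisible elements. The cleanest route is: a countable torsion-free abelian group is free if and only if it is $\aleph_1$-free and in fact, for the countable case, if and only if it is \emph{separable and has no free-rank obstruction}; concretely, Pontryagin's theorem says a countable torsion-free abelian group is free iff every finite-rank pure subgroup is free (finitely generated). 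One verifies this by combining triviality of $A(L)_\divisible=0$ with the fact that finitely generated subgroups of $A(L)$, modulo torsion, have bounded $p$-divisibility — the key input being that $A(L')_{\text{tor}}$ is finite for the relevant finite extensions $L'$, which for $\overline{K}[\sigma]$ holds for almost all $\sigma$ by Theorem~\ref{knownresultforKbarsigma}~(3-iii) / Theorem~\ref{torptsoffinextofKbarsigma} (transported from $\overline{K}(\sigma)$ via the inclusion $\overline{K}[\sigma]\subseteq\overline{K}(\sigma)$ and Weil restriction). Given finiteness of all torsion, a Kummer-sequence argument bounds, for each finitely generated subgroup $N\subseteq A(L)$ and each prime $p$, the $p$-divisibility of the image of $N$ in $A(L)/A(L)_\tor$ by a constant independent of $p$ (in fact trivial for all but finitely many $p$), and this uniform bound is exactly what upgrades ``no divisible part'' to ``every finite-rank pure subgroup is a finitely generated free summand''.

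I would structure the write-up as: (1) state and cite Pontryagin's criterion for freeness of countable torsion-free abelian groups; (2) an almost-all-$\sigma$ clause collecting, via Borel--Cantelli and a countability/diagonal argument over the countably many $(L,A)$, the three inputs — finiteness of $A(L')_{\text{tor}}$ for all finite $L'/\overline{K}[\sigma]$, triviality of $A(L')_\divisible$, and infinitude of $\rank A(\overline{K}[\sigma])$ for all $A$ — each obtained from the corresponding statement over $\overline{K}(\sigma)$ in Section~\ref{sec-preliminaries} by Weil restriction; (3) the purely group-theoretic lemma: a countable torsion-free abelian group in which every finitely generated subgroup has uniformly bounded divisibility (equivalently, whose every finite-rank pure hull is finitely generated) is free; (4) the verification, via the Kummer exact sequence $0\to A(L)/nA(L)\to H^1(L,A[n])$ controlled by the finiteness of torsion, that $A(L)/A(L)_\tor$ satisfies the hypothesis of (3).

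The main obstacle I anticipate is step (3)/(4): trivial divisible part alone does \emph{not} imply freeness for a general countable torsion-free group (e.g.\ a countable product-like group such as $\prod_{p}\Z$ restricted to a countable pure subgroup can be non-free while having trivial divisible part — indeed $\Z^{\mathbb N}$ itself has trivial divisible part but is not free, and it has countable non-free subgroups). So the argument genuinely needs the \emph{uniform} bound on divisibility coming from finiteness of torsion and the Kummer sequence, not merely Lemma~\ref{lem:not free}; getting the quantifiers right — a single bound working for a fixed finitely generated $N$ across all primes $p$, uniformly enough to invoke Pontryagin — is the delicate point. A secondary technical nuisance is the passage from $\overline{K}[\sigma]$ to $\overline{K}(\sigma)$ for the torsion-finiteness input, since $\overline{K}[\sigma]$ is the maximal Galois subextension and one must make sure Weil restriction and the ``finite extension'' quantifier interact correctly; this is routine but must be spelled out.
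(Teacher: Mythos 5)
Your proposal follows the same skeleton as the paper's proof: establish finiteness of $A(L)_\tor$ by passing to the finite extension $M = L\cdot\overline{K}(\sigma)$ of $\overline{K}(\sigma)$ and invoking the torsion results of Section~\ref{sec-preliminaries}, deduce freeness of $A(L)/A(L)_\tor$ from finiteness of torsion, and get rank $\aleph_0$ from Geyer--Jarden via Weil restriction along $L/\overline{K}[\sigma]$. The one genuine difference is the middle step. The paper dispatches it in one line by citing Moon's Proposition~\ref{MoonProp}: if $L$ is Galois over a countable field and $A(L)_\tor$ is finite, then $A(L)/A(L)_\tor$ is free of rank at most $\aleph_0$. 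What you propose to do with Pontryagin's criterion and a Kummer-sequence bound is essentially to reprove that proposition. Your sketch does contain the right mechanism, and the ``delicate point'' you flag is resolved as follows: if $Q\in A(L)$ and $nQ\in A(L_0)$ with $L/L_0$ Galois, then $\tau\mapsto\tau Q-Q$ is a cocycle valued in $A(L)[n]\subseteq A(L)_\tor$, so $tQ\in A(L_0)$ where $t$ is the exponent of the finite group $A(L)_\tor$; this uniform bound makes every finite-rank pure subgroup of $A(L)/A(L)_\tor$ finitely generated, which is exactly Pontryagin's hypothesis. The structural input you must make explicit is that $L$ \emph{is} Galois over a finite extension $K'$ of $K$ (write $L=\overline{K}[\sigma]\cdot L_0$ with $L_0/K$ finite and use that $\overline{K}[\sigma]/K$ is Galois); the absolute Kummer sequence $A(L)/nA(L)\hookrightarrow H^1(G_L,A[n])$ that you write down is not the relevant one --- you need the relative statement for $\Gal(L/L_0)$.

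Three smaller corrections. First, the divisible-part input (Theorem~\ref{knownresultforKbarsigma}~(4)) is a red herring: as you yourself observe, trivial divisible part does not imply freeness, and the paper makes no use of it in this proof --- finiteness of torsion is the only input. Second, for a \emph{semiabelian} $A$ you also need finiteness of the torsion of the torus part, which is supplied by Theorem~\ref{knownresultforKbarsigma}~(2-ii) (only finitely many roots of unity in $M$); the abelian-variety torsion results you cite do not cover this on their own. Third, the diagonal argument over ``countably many pairs $(L,A)$'' is both unnecessary and problematic as stated, since any such enumeration depends on $\sigma$; the statements quoted from Section~\ref{sec-preliminaries} already come in the form ``for almost all $\sigma$, for all finite extensions $M$ and all $A$'', so no further union over a $\sigma$-dependent family is required.
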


Before proving this theorem, let us recall the proposition by Moon~\cite{Moon09}, which plays a key role in our proof.
It is notable that Moon seemingly proved this proposition only for the case where $K$ is a number field and $A$ is an abelian variety, but the same proof works in the more general setting.

\begin{proposition}[Moon~{\cite[Proposition~7]{Moon09}}]\label{MoonProp}
Let $K$ be a field of cardinality at most $\aleph_0$ and $A$ a semiabelian variety over $K$.
Let $L$ be a Galois extension of $K$ such that $A(L)_\tor$ is finite.
Then the group $A(L) / A(L)_\tor$ is a free $\mathbb{Z}$-module of rank at most $\aleph_0$.
\end{proposition}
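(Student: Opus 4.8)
The plan is to invoke \emph{Pontryagin's criterion}, according to which a countable torsion-free abelian group is free once each of its finite-rank subgroups is finitely generated. Since $L$ is countable so is $A(L)$, and hence $M \coloneqq A(L)/A(L)_{\tor}$ is a countable torsion-free abelian group; as every finite-rank subgroup of $M$ lies inside the purification (``saturation'') of some finitely generated subgroup, and subgroups of finitely generated abelian groups are finitely generated, it suffices to show that the purification in $M$ of an arbitrary finitely generated subgroup is finitely generated. Put differently, one has to exclude the behaviour isolated in Lemma~\ref{lem:not free} --- a nonzero element divisible by infinitely many integers --- together with its finite-rank analogues; the finiteness of $A(L)_{\tor}$ and the Galois hypothesis on $L/K$ are exactly what make this possible.

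So I would fix $P_1, \dots, P_s \in A(L)$, set $T \coloneqq A(L)_{\tor}$ with $t_0$ its exponent, and let $K' \subseteq L$ be the finite extension of $K$ generated by their coordinates; then $L/K'$ is again Galois and $P_1, \dots, P_s \in A(K')$. Writing $\langle P_1, \dots, P_s\rangle$ for the subgroup of $A(L)$ they generate, the purification in $M$ of the image of this subgroup is the image of
\[
\hat N \coloneqq \{\, P \in A(L) \mid mP \in \langle P_1, \dots, P_s\rangle + T \text{ for some } m \ge 1 \,\},
\]
so it is enough to prove $\hat N / T$ finitely generated. The heart of the matter is a Galois descent: if $P \in \hat N$, write $mP = \lambda + \tau$ with $\lambda \in \langle P_1, \dots, P_s\rangle$ and $\tau \in T$; then, $T$ being stable under $\Gal(L/K')$, for every $\sigma \in \Gal(L/K')$ one gets $m(\sigma P - P) = \sigma\tau - \tau \in T$, whence $\sigma P - P \in A(L)[mt_0] \subseteq T$. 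In particular $t_0(\sigma P - P) = 0$ for all $\sigma$, so $t_0 P \in A(L)^{\Gal(L/K')} = A(K')$; thus $t_0 \hat N \subseteq A(K')$, and multiplying $mP = \lambda + \tau$ by $t_0$ shows $m(t_0 P) \in \langle t_0 P_1, \dots, t_0 P_s\rangle$. (One also reads off that the $\Gal(L/K')$-orbit of $P$ has at most $|T|$ elements, so every $P \in \hat N$ lies over an extension of $K'$ of degree at most $|T|$.)

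Hence $t_0 \hat N$ is contained in the saturation $S$ of the finitely generated subgroup $\langle t_0 P_1, \dots, t_0 P_s\rangle$ of $A(K')$. The remaining input is that $S$ is finitely generated modulo torsion; this uses, beyond countability, that $K$ (hence $K'$) is finitely generated over $\mathbb{Q}$, which holds in all applications of the proposition: then $A(K')/A(K')_{\tor}$ embeds into a free abelian group --- by the Mordell--Weil--Lang--N\'eron theorem for the abelian part of $A$, and, for the toric part, using that $K'^{\times}$ is finitely generated modulo roots of unity --- and a finite-rank subgroup of a free abelian group is finitely generated. Since $\hat N/T$ is torsion-free, multiplication by $t_0$ gives an injection $\hat N/T \hookrightarrow S/S_{\tor}$, so $\hat N/T$ is finitely generated. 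Pontryagin's criterion now yields that $M = A(L)/A(L)_{\tor}$ is free, and, being countable, of rank at most $\aleph_0$.

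The step I expect to be the main obstacle is the Galois descent: extracting from the finiteness of $A(L)_{\tor}$ both that $t_0 P \in A(K')$ and that division points appear in $L$ only over extensions of $K'$ of bounded degree. This is the sole place where ``$L/K$ Galois'' is used, and it genuinely fails for non-Galois $L/K$ --- for instance when $L$ is obtained by adjoining a single compatible system of $\ell$-power division points of a non-torsion rational point, a field over which $A(L)_{\tor}$ may well be finite while $A(L)/A(L)_{\tor}$ is not free --- in agreement with Lemma~\ref{lem:not free}.
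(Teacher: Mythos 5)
The paper offers no proof of this proposition: it cites Moon~\cite[Proposition~7]{Moon09} and remarks only that ``the same proof works in the more general setting.'' Your reconstruction --- Pontryagin's criterion, reduction to the purification of a finitely generated subgroup, the Galois-descent step showing $t_0\hat N\subseteq A(K')$ with $m(t_0P)\in\langle t_0P_1,\dots,t_0P_s\rangle$, and then a Mordell--Weil/Lang--N\'eron input over $K'$ --- is surely the intended argument, and each step checks out.

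The substantive issue is the one you flag yourself: your proof uses that $K$ (hence $K'$) is finitely generated over $\Q$, whereas the proposition as stated assumes only that $K$ is countable. This is not a gap in your argument but a defect in the statement, which is in fact false as written: take $K=L=\Q\bigl(2^{1/n!}\mid n\ge 1\bigr)\subset\R$ and $A=\Gm$. Then $L/K$ is (trivially) Galois and $A(L)_\tor=\mu(L)=\{\pm1\}$ is finite, yet the class of $2$ in $L^\times/\{\pm1\}$ is a nonzero element divisible by $n!$ for every $n$, so $A(L)/A(L)_\tor$ is not free by Lemma~\ref{lem:not free}. Some hypothesis forcing Mordell--Weil-type finiteness over the finite subextensions of $L/K$ (e.g.\ $K$ finitely generated over its prime field) is genuinely needed; since the proposition is only ever applied with base field a finite extension of a field finitely generated over $\Q$, nothing downstream in the paper is affected, but the statement should be amended. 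One small slip in your last paragraph of the main argument: for $K'$ finitely generated over $\Q$ the group $K'^\times$ is \emph{free} modulo roots of unity, not finitely generated modulo roots of unity (already $\Q^\times/\{\pm1\}$ has infinite rank); likewise $A(K')/A(K')_\tor$ need not literally embed into a free group, being an extension of a subquotient of the Mordell--Weil group of the abelian quotient by a free group coming from the toric part. The property you actually use --- that every finite-rank subgroup of $A(K')/A(K')_\tor$ is finitely generated --- does follow from this extension structure, and that is all the argument requires.
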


\begin{proof}[Proof of Theorem~\textup{\ref{mythm-FreenessOfMWGmodTor}}]
We separate the proof of this theorem into two parts, one for the freeness of the group $A(L) / A(L)_\tor$ and the other for the infiniteness of the rank of the group $A(L)$.
First, we show the former part.

Let $\sigma \in G_K^e$ satisfy the following: for any finite extension $M$ of $\overline{K}(\sigma)$, only finitely many roots of unity belong to $M$ and the group $B(M)_\tor$ is finite for any abelian variety $B$ over $M$.
Since $e \ge 2$, almost all $\sigma \in G_K^e$ satisfy this condition by Theorems~\ref{knownresultforKbarsigma}~(2-ii) and \ref{torptsoffinextofKbarsigma}~(1).
Let $L$ be a finite extension of $\overline{K}[\sigma]$ and $M = L \cdot \overline{K}(\sigma)$.
Then $M$ is a finite extension of $\overline{K}(\sigma)$.
Let $A$ be a semiabelian variety over $L$.
Then $A$ is an extension of an abelian variety $B$ by a torus $T$.
By assumption, the groups $T(M)_\tor$ and $B(M)_\tor$ are finite, and so are $A(M)_\tor$ and $A(L)_\tor$.
There exists a finite extension $K'$ of $K$ in $L$ such that $L / K'$ is Galois and $A$ is defined over $K'$.
Applying Proposition~\ref{MoonProp} to $L / K'$ and $A$, we find that $A(L) / A(L)_\tor$ is a free $\mathbb{Z}$-module of rank at most $\aleph_0$.

It remains to show that almost all $\sigma \in G_K^e$ satisfy the following condition: for any finite extension $L$ of $\overline{K}[\sigma]$ and any semiabelian variety $A$ of positive dimension over $L$, the group $A(L)$ has infinite rank.
In fact, it turns out that we only need to prove this when $A$ is an abelian variety of positive dimension.
Theorem~\ref{knownresultforKbarsigma}~(1) says that the following weaker claim than this statement holds for almost all $\sigma \in G_K^e$: for any abelian variety $A$ of positive dimension over $\overline{K}[\sigma]$, the group $A(\overline{K}[\sigma])$ has rank $\aleph_0$.
Let $\sigma$ satisfy the statement in the above claim, $L$ be a finite extension of $\overline{K}[\sigma]$, and $A$ an abelian variety of positive dimension over $L$.
Let $B = \mathrm{Res}_{L / \overline{K}[\sigma]}(A)$ be the Weil restriction of $A$ with respect to $L / \overline{K}[\sigma]$.
Then $B$ is an abelian variety over $\overline{K}[\sigma]$ and we have $A(L) \cong B(\overline{K}[\sigma])$ by~\cite[Lemma~6.1]{JP22}.
The assumption on $\sigma$ implies that $B(\overline{K}[\sigma])$ has rank $\aleph_0$ and $A(L)$ also does, which completes the proof.
\end{proof}

As described in the proof, for almost all $\sigma \in G_K^e$, any finite extension $L$ of $\overline{K}[\sigma]$, and any semiabelian variety $A$ over $L$, the torsion group $A(L)_\tor$ is finite.
Combining this remark with the theorem, we obtain the structure of the group $A(L)$.

\begin{corollary}\label{mycor-FreenessOfMWGmodTor}
Let $K$ be a finitely generated field over $\mathbb{Q}$ and $e \ge 2$.
Then, for almost all $\sigma \in G_K^e$, the following statement holds: for any finite extension $L$ of $\overline{K}[\sigma]$ and any semiabelian variety $A$ over $L$, the group $A(L)$ is the direct sum of a finite torsion subgroup and a free $\mathbb{Z}$-module of rank $\aleph_0$.
\end{corollary}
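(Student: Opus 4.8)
The plan is to derive this corollary formally from Theorem~\ref{mythm-FreenessOfMWGmodTor} together with the finiteness of the torsion subgroup, which was already extracted in the course of that proof. We may assume $\dim A > 0$, the zero-dimensional case being trivial.

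First I would fix $\sigma$ in the full-measure subset of $G_K^e$ on which the conclusion of Theorem~\ref{mythm-FreenessOfMWGmodTor} holds and, simultaneously, for every finite extension $M$ of $\overline{K}(\sigma)$ only finitely many roots of unity lie in $M$ (Theorem~\ref{knownresultforKbarsigma}~(2-ii)) and $B(M)_\tor$ is finite for every abelian variety $B$ over $M$ (Theorem~\ref{torptsoffinextofKbarsigma}~(1)); this is where the hypothesis $e \ge 2$ enters. Given such a $\sigma$, a finite extension $L$ of $\overline{K}[\sigma]$, and a semiabelian variety $A$ over $L$, I would set $M = L \cdot \overline{K}(\sigma)$, which is a finite extension of $\overline{K}(\sigma)$, and write $A$ as an extension of an abelian variety $B$ by a torus $T$. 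Then $T(M)_\tor$ embeds into a finite product of groups of roots of unity of $M$ and is therefore finite; $B(M)_\tor$ is finite by the choice of $\sigma$; hence $A(M)_\tor$ is finite, and so is $A(L)_\tor \subseteq A(M)_\tor$.

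It then remains to combine this with Theorem~\ref{mythm-FreenessOfMWGmodTor}, which gives that $A(L)/A(L)_\tor$ is a free $\mathbb{Z}$-module of rank $\aleph_0$. Being free, it is projective, so the tautological short exact sequence
\[ 0 \to A(L)_\tor \to A(L) \to A(L)/A(L)_\tor \to 0 \]
splits; hence $A(L) \cong A(L)_\tor \oplus \bigl(A(L)/A(L)_\tor\bigr)$, the direct sum of a finite torsion subgroup and a free $\mathbb{Z}$-module of rank $\aleph_0$, as claimed. I do not expect any real obstacle: the corollary is essentially a repackaging of the theorem, and the only step that genuinely uses the hypotheses --- in particular $e \ge 2$ --- is the finiteness of $A(L)_\tor$ for a \emph{semiabelian} (not merely abelian) $A$, where the torus part is controlled by the finiteness of the group of roots of unity in $M$; the rest is just the splitting of a short exact sequence with free quotient.
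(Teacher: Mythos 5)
Your proposal is correct and follows essentially the same route as the paper: choose $\sigma$ satisfying Theorems~\ref{knownresultforKbarsigma}~(2-ii), \ref{torptsoffinextofKbarsigma}~(1), and~\ref{mythm-FreenessOfMWGmodTor}, deduce finiteness of $A(L)_\tor$ via the torus/abelian-variety decomposition over $M = L\cdot\overline{K}(\sigma)$ (exactly as in the proof of the theorem, which the paper's proof of the corollary simply references), and split the exact sequence $0 \to A(L)_\tor \to A(L) \to A(L)/A(L)_\tor \to 0$ using projectivity of the free quotient. No gaps; the only cosmetic difference is that you re-derive the torsion finiteness inline rather than citing it from the proof of the theorem.
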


\begin{proof}
Let $\sigma \in G_K^e$ satisfy each statement in Theorems~\ref{knownresultforKbarsigma}~(2-ii), \ref{torptsoffinextofKbarsigma}~(1), and~\ref{mythm-FreenessOfMWGmodTor}.
We show that the statement in the corollary holds for $\sigma$.
Let $L$ be a finite extension of $\overline{K}[\sigma]$ and $A$ a semiabelian variety over $L$.
Then the group $A(L) / A(L)_\tor$ is a free $\mathbb{Z}$-module of rank $\aleph_0$ and in particular it is projective.
Hence the identity map on $A(L) / A(L)_\tor$ can be lifted to a homomorphism $A(L) / A(L)_\tor \to A(L)$.
This provides a section of the exact sequence
\[ 0 \to A(L)_\tor \to A(L) \to A(L) / A(L)_\tor \to 0 \]
and we have $A(L) = A(L)_\tor \oplus A(L) / A(L)_\tor$.
The corollary follows from this decomposition.
\end{proof}

\begin{remark}
If $e = 1$, then the proof of Theorem~\ref{mythm-FreenessOfMWGmodTor} is invalid.
This is because, for almost all $\sigma \in G_K$ and any abelian variety $A$, the groups $\Gm(\overline{K}(\sigma))_\tor$ and $A(\overline{K}(\sigma))_\tor$ are infinite (Theorem~\ref{knownresultforKbarsigma}, (2-i) and (3-iii)).
It is not known whether Theorem~\ref{mythm-FreenessOfMWGmodTor} still holds in the case $e = 1$.
We note that $\Gm(\overline{K}(\sigma)) / \Gm(\overline{K}(\sigma))_\tor$ is not free for almost all $\sigma \in G_K$ (see Remark~\ref{rmk-TKFness}).

We also mention the following facts.
Let $K$ be a finitely generated field over $\mathbb{Q}$ and $e$ a positive integer.
Then, for almost all $\sigma \in G_K^e$, the field $\overline{K}(\sigma)$ is a Galois extension of no proper subfield of $\overline{K}(\sigma)$ and $\overline{K}(\sigma) / \overline{K}[\sigma]$ is an infinite extension.
These facts follow from~\cite[Theorems~7.9 and 7.10]{BS}.
\end{remark}

\section{Kummer-faithfulness for some large algebraic extensions}
\label{sec-KFforKbarsigma}

In the present section, we investigate when the fields $\overline{K}(\sigma)$ and $\overline{K}[\sigma]$ are Kummer-faithful for a finitely generated field $K$ over $\mathbb{Q}$ and $\sigma \in G_K^e$.
The following proposition suggests that these fields should be of interest since every sub-$p$-adic field is Kummer-faithful.

\begin{proposition}\label{prop-NotSubpAdic}
Let $K$ be a finitely generated field over $\mathbb{Q}$ and $e$ a positive integer.
Then neither $\overline{K}(\sigma)$ nor $\overline{K}[\sigma]$ are sub-$p$-adic for almost all $\sigma \in G_K^e$ and any prime number $p$.
\end{proposition}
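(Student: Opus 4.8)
The plan is to show that a sub-$p$-adic field cannot contain infinitely many roots of unity of prime order, and then to contradict this using the known distribution results for torsion in $\overline{K}(\sigma)$ and $\overline{K}[\sigma]$. First I would recall that, by Definition~\ref{def:kummer faithful}~(iii), if $\overline{K}(\sigma)$ (or $\overline{K}[\sigma]$) were sub-$p$-adic, then it embeds in a finitely generated extension $L/\Qp$. A finitely generated field extension of $\Qp$ has finite transcendence degree over $\Qp$, and $\Qp$ itself contains only finitely many roots of unity (the group $\mu(\Qp)$ is finite: it is $\mu_{p-1} \times \mu_{p^\infty}(\Qp)$, and $\mu_{p^\infty}(\Qp)$ is finite since $\Q_p(\zeta_{p^n})/\Qp$ has degree $p^{n-1}(p-1) \to \infty$). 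A key point is that adjoining finitely many algebraically independent transcendentals does not add roots of unity, and a finite algebraic extension of a field with finitely many roots of unity again has finitely many. Hence any finitely generated extension $L$ of $\Qp$ contains only finitely many roots of unity, and so does any subfield.

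Next I would invoke Theorem~\ref{knownresultforKbarsigma} to produce infinitely many roots of unity in our fields, splitting into cases on $e$ and on which field we treat. For $\overline{K}[\sigma]$: since $\overline{K}[\sigma]$ is a Galois (indeed the maximal Galois) subextension, and for almost all $\sigma$ the torsion points of $\Gm$ — equivalently the roots of unity — behave as in the number-field case. In fact, for $\overline{K}(\sigma)$ with $e = 1$, Theorem~\ref{knownresultforKbarsigma}~(2-i) directly gives infinitely many roots of unity in $\overline{K}(\sigma)$. For $e \ge 2$, Theorem~\ref{knownresultforKbarsigma}~(2-ii) gives only \emph{finitely} many roots of unity in $\overline{K}(\sigma)$, so that route fails for the torus; instead I would use an abelian variety with large torsion — e.g. an elliptic curve $E$ over $K$, for which Theorem~\ref{knownresultforKbarsigma}~(3-i) (Geyer--Jarden, valid for any $K$) asserts that $E(\overline{K}(\sigma))[\ell] \neq 0$ for infinitely many primes $\ell$ for almost all $\sigma$.

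So the cleaner uniform argument, which I would actually carry out, replaces "finitely many roots of unity" with a finiteness statement about $\ell$-torsion of a fixed abelian variety valid in a sub-$p$-adic field. Fix an elliptic curve $E$ defined over $K$ (hence over the prime field, or at worst over a number field, after base change — $K$ is finitely generated over $\Q$ so this is harmless). If a field $F$ is sub-$p$-adic, $F \hookrightarrow L$ with $L/\Qp$ finitely generated; then $E(L)_\tor$ is finite, because $E(\Qp)_\tor$ is finite (standard: $E(\Qp)$ has a finite-index subgroup $\cong \Zp$, and torsion injects into the reduction plus a bounded piece), a finite extension of $\Qp$ still has finite $E$-torsion, and adjoining transcendentals adds no torsion (any torsion point has coordinates algebraic over the base). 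Hence $E(F)_\tor$ is finite. But for almost all $\sigma \in G_K^e$, $E(\overline{K}(\sigma))_\tor$ is infinite when $e=1$ (Theorem~\ref{knownresultforKbarsigma}~(3-i), via the "infinitely many $\ell$" clause) — and more importantly, for any $e \ge 1$, $E(\overline{K}(\sigma))[\ell] \neq 0$ for infinitely many primes $\ell$ for almost all $\sigma$ (same clause of (3-i), which holds for any $K$), forcing $E(\overline{K}(\sigma))_\tor$ — and a fortiori $E$ over any larger field such as a finite extension — to be infinite; the same holds over $\overline{K}[\sigma] \subseteq \overline{K}(\sigma)$? — no, $\overline{K}[\sigma]$ is \emph{smaller}, so for it I must argue separately: for almost all $\sigma \in G_K^e$, $E(\overline{K}[\sigma])$ already has rank $\aleph_0$ by Theorem~\ref{knownresultforKbarsigma}~(1), in particular $E(\overline{K}[\sigma])$ is infinite, and an infinite finitely-generated-rank... — cleanest is: $E(\overline{K}[\sigma])$ has infinite rank, so $E(\overline{K}[\sigma]) \otimes \Q$ is infinite-dimensional, which is impossible over a sub-$p$-adic field $F$ since $E(F)$ would have finite rank (Mordell--Weil / Néron over the finitely generated field $L \supseteq F$, so $E(F) \subseteq E(L)$ is finitely generated). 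This last observation in fact handles \emph{both} fields and \emph{all} $e \ge 1$ at once: a sub-$p$-adic field $F$ embeds in $L/\Qp$ finitely generated, $L$ is a finitely generated field of characteristic zero, $E(L)$ is finitely generated by the Mordell--Weil theorem over finitely generated fields (Néron, Lang--Néron), so $E(F)$ is finitely generated; yet $E(\overline{K}(\sigma))$ and $E(\overline{K}[\sigma])$ have rank $\aleph_0$ for almost all $\sigma$ (Theorem~\ref{knownresultforKbarsigma}~(1), (3)), contradiction.

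The main obstacle is bookkeeping the measure-zero exceptional sets: for each of $\overline{K}(\sigma)$, $\overline{K}[\sigma]$, and each $e$, I need the relevant "good" conclusion of Theorem~\ref{knownresultforKbarsigma} to hold outside a null set, and then take the finite union (over a fixed choice of $E$, and over both fields) of these null sets; since "sub-$p$-adic for \emph{some} $p$" is what we negate, and the contradiction already holds for the fixed $E$ independently of $p$, no union over $p$ is needed. I would phrase the final argument as: fix an elliptic curve $E$ over $K$; for $\sigma$ outside the exceptional null set of Theorem~\ref{knownresultforKbarsigma}~(1) (and (3-i) if one prefers the torsion version), both $E(\overline{K}(\sigma))$ and $E(\overline{K}[\sigma])$ have rank $\aleph_0$; if either field were sub-$p$-adic for any $p$ it would embed in a finitely generated field over $\Qp$, over which $E$ has finitely generated Mordell--Weil group, forcing finite rank — contradiction; hence for almost all $\sigma$ neither field is sub-$p$-adic for any prime $p$.
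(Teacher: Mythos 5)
Your final argument contains a genuine gap: you apply the Mordell--Weil (Lang--N\'eron) theorem to a field $L$ that is finitely generated \emph{over $\Qp$}, but that theorem requires the field to be finitely generated over its \emph{prime field}, and $\Qp$ is not (it is uncountable). In fact $E(\Qp)$ is very far from finitely generated: it contains a finite-index subgroup isomorphic to $(\Z_p,+)$, so $E(\Qp)\otimes_\Z\Q$ has uncountable dimension. Worse for your strategy, there exist countable sub-$p$-adic fields over which every positive-dimensional abelian variety has infinite rank (e.g.\ $\overline{\Q}\cap\Qp$, by results on abelian varieties over ample/large fields), so ``$E(\overline{K}(\sigma))$ has rank $\aleph_0$'' simply does not contradict sub-$p$-adicity. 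Your fallback via torsion also fails for $e\ge 2$: the clause producing infinitely many primes $\ell$ with $E(\overline{K}(\sigma))[\ell]\neq 0$ is Statement~(a) of Theorem~\ref{knownresultforKbarsigma}~(3), which assumes $e=1$; for $e\ge 2$ Statement~(b) says the torsion is \emph{finite} for almost all $\sigma$, and likewise there are only finitely many roots of unity (as you yourself noted for $\Gm$). So none of your three candidate invariants of sub-$p$-adic fields --- finiteness of roots of unity, finiteness of torsion of a fixed $E$, finite generation of $E(F)$ --- is actually violated by $\overline{K}(\sigma)$ or $\overline{K}[\sigma]$ when $e\ge 2$.

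The paper's proof goes a completely different way. Since $\overline{k}[\sigma]$ is algebraic over a number field $k$, sub-$p$-adicity would force it into a finite extension $L$ of $\Qp$. For almost all $\sigma$ the field $\overline{k}[\sigma]$ is pseudo algebraically closed, and an algebraic extension of a PAC field that is henselian with respect to a nonarchimedean place must be the algebraic closure; hence the henselization of $\overline{k}[\sigma]$ at the place induced from $L$ is all of $\overline{k}$, which would force $\overline{\Q}\subseteq L$ --- impossible since $\overline{\Q}\cap L\neq\overline{\Q}$ for $L/\Qp$ finite. The case of a general finitely generated $K$ is then reduced to the number field case by restricting $\sigma$ to $\overline{k}$ for $k=\overline{\Q}\cap K$, and $\overline{K}(\sigma)\supseteq\overline{K}[\sigma]$ handles the other field. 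To repair your proof you would need an invariant of sub-$p$-adic fields that $\overline{K}[\sigma]$ genuinely fails to have for $e\ge 2$; the PAC property (equivalently here, the impossibility of a nontrivial henselian structure) is precisely such an invariant, whereas rank and torsion are not.
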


\begin{proof}
Since $\overline{K}(\sigma)$ contains $\overline{K}[\sigma]$, we only have to discuss whether $\overline{K}[\sigma]$ is sub-$p$-adic or not.
We first consider the case where $K = k$ is a number field.
It is known that $\overline{k}[\sigma]$ is \textit{pseudo algebraically closed} (see~\cite[Chapter~12]{FriedJ} for the definition) for almost all $\sigma \in G_k^e$~\cite[Theorem~21.12.2]{FriedJ}.
Moreover, an algebraic extension of a pseudo algebraically closed field is henselian with respect to some nonarchimedean place if and only if it is the algebraic closure~\cite[Corollary~12.5.6]{FriedJ}.
Then we have $\overline{k} \cap (\overline{k}[\sigma])_\mathfrak{p} = \overline{k}$ for almost all $\sigma \in G_k^e$ and any nonarchimedean place $\mathfrak{p}$ of $\overline{k}[\sigma]$.
If $\overline{k}[\sigma]$ is sub-$p$-adic, then it is a subfield of a finite extension of $\mathbb{Q}_p$.
However, since $\overline{\mathbb{Q}} \cap L \neq \overline{\mathbb{Q}}$ for any finite extension $L$ of $\mathbb{Q}_p$, the field $\overline{k}[\sigma]$ is not sub-$p$-adic for almost all $\sigma \in G_k^e$.

For a general $K$, let $k$ be the algebraic closure of $\mathbb{Q}$ in $K$.
Then $k$ is a number field.
The restriction map $G_K \to G_k$ is surjective and $\overline{k}[\sigma|_{\overline{k}}] \subset \overline{K}[\sigma]$ for any $\sigma \in G_K^e$.
As we have shown above, the field $\overline{k}[\sigma|_{\overline{k}}]$ is not sub-$p$-adic for almost all $\sigma \in G_K^e$.
For such $\sigma$, the field $\overline{K}[\sigma]$ is also not sub-$p$-adic.
\end{proof}


\begin{theorem}\label{mythm-KFforKbarbrasigma}
Let $K$ be a finitely generated field over $\mathbb{Q}$ and $e$ a positive integer.
Then, for almost all $\sigma \in G_K^e$, any finite extension of $\overline{K}[\sigma]$ is Kummer-faithful.
\end{theorem}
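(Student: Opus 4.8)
Here is a proof plan.

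The plan is to reduce to showing that, for almost all $\sigma\in G_K^e$, the field $\overline{K}[\sigma]$ is itself Kummer-faithful: since Kummer-faithfulness of a perfect field already quantifies over all of its finite extensions, and all fields occurring here have characteristic zero (hence are perfect), this is equivalent to the assertion of the theorem. By Proposition~\ref{CharacterizationOfKFfields} it then suffices to check, for almost all $\sigma$, that (a) $\Gm(L)_\divisible=0$ for every finite extension $L/\overline{K}[\sigma]$, and (b) $A(\overline{K}[\sigma])_\divisible=0$ for every abelian variety $A$ over $\overline{K}[\sigma]$. The structural feature that makes this work --- and that distinguishes $\overline{K}[\sigma]$ from $\overline{K}(\sigma)$ --- is that $\overline{K}[\sigma]/K$, hence $\overline{K}[\sigma]/F$ for every intermediate field $F$, is Galois.

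For (b), the main point: an abelian variety over $\overline{K}[\sigma]$ is defined over a finite subextension $K_0/K$, and there are only countably many such pairs $(K_0,A)$, so it is enough to show that for each pair the set of $\sigma$ with $K_0\subseteq\overline{K}[\sigma]$ and $A(\overline{K}[\sigma])_\divisible\neq 0$ is null. Suppose $0\neq P\in A(\overline{K}[\sigma])_\divisible$. If $P$ is torsion, then $A(\overline{K}[\sigma])[\ell^\infty]$ has nontrivial divisible part, hence is infinite, for some prime $\ell$; but $A(\overline{K}[\sigma])[\ell^\infty]\subseteq A(\overline{K}(\sigma))[\ell^\infty]$, and the latter is finite for almost all $\sigma$ by statement (c) of Theorem~\ref{knownresultforKbarsigma}~(3) (which holds in characteristic zero), so this confines $\sigma$ to a fixed null set. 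The essential case is therefore a \emph{non-torsion} $P$ divisible by all powers of a fixed prime $\ell$, under the hypothesis that $A(\overline{K}[\sigma])[\ell^\infty]$ is finite. Here I would descend $A$ and $P$ to $K''\coloneqq K_0(P)$, a finite extension of $K$ inside $\overline{K}[\sigma]$, so that $\overline{K}[\sigma]/K''$ is Galois and $\Gamma\coloneqq\Gal(\overline{K}/\overline{K}[\sigma])$ is normal in $G_{K''}$. Applying inflation--restriction to the Kummer sequences $0\to A[\ell^m]\to A\xrightarrow{\ell^m}A\to 0$ over $K''$: the obstruction class $\delta_m(P)\in H^1(G_{K''},A[\ell^m])$ to $\ell^m$-divisibility of $P$ in $A(K'')$ restricts to zero over $\overline{K}[\sigma]$, so it is inflated from $H^1(\Gal(\overline{K}[\sigma]/K''),A[\ell^m](\overline{K}[\sigma]))$. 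Since $A(\overline{K}[\sigma])[\ell^\infty]$ is finite, the modules $A[\ell^m](\overline{K}[\sigma])$ stabilize to a finite $\ell$-group $F$ on which multiplication by $\ell$ is nilpotent; as the $\delta_m(P)$ form a compatible system under the transition maps (which are eventually multiplication by $\ell$), threading it through the induced nilpotent endomorphisms of $H^1(\Gal(\overline{K}[\sigma]/K''),F)$ forces $\delta_m(P)=0$ for all $m$. Hence $P\in\bigcap_m\ell^m A(K'')$, a torsion group because $A(K'')$ is finitely generated by the Lang--Néron theorem ($K''$ being finitely generated over $\mathbb{Q}$) --- contradicting the choice of $P$. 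For $e\geq 2$ one may alternatively obtain (b) from Theorem~\ref{mythm-KFforKbarsigma} and the elementary fact that a subfield of a Kummer-faithful perfect field is Kummer-faithful, but the argument above is uniform in $e$.

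For (a) I would run the same inflation--restriction argument with $\Gm$ and $\mu_{\ell^m}$ replacing $A$ and $A[\ell^m]$. A wrinkle is that a finite extension $L$ of $\overline{K}[\sigma]$ need not be Galois over $K$ (Galois-ness is not transitive), so one first replaces $L$ by $\widetilde{L}\coloneqq\overline{K}[\sigma]\cdot F$ for a finite Galois $F/K$ containing a primitive element of $L/\overline{K}[\sigma]$: then $\widetilde{L}/K$ is Galois, $\widetilde{L}$ is finite over $\overline{K}[\sigma]$, and $\Gm(L)_\divisible\subseteq\Gm(\widetilde{L})_\divisible$. The finiteness input is that $\mu_{\ell^\infty}(\widetilde{L})$ is finite for almost all $\sigma$ and every $\ell$; this holds because the image of $\Gal(\overline{K}/\overline{K}[\sigma])$ in $\mathrm{Aut}(\mu_{\ell^\infty})=\mathbb{Z}_\ell^\times$ equals the closure of the cyclic subgroup generated by the $\ell$-adic cyclotomic character of $\sigma$ (conjugation being trivial on cyclotomic extensions), which has infinite order --- hence infinite image, and so does its finite-index subgroup $\Gal(\overline{K}/\widetilde{L})$ --- for $\sigma$ outside a null set. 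The argument then locates any element $a$ of $\Gm(\widetilde{L})_\divisible$ in $\bigcap_m(K''^\times)^{\ell^m}$ with $K''=K(a)$; this intersection consists of roots of unity because $K''$ is finitely generated over $\mathbb{Q}$ (so $(K'')^\times$ is free modulo its finite torsion subgroup; for $K''$ a number field one uses instead the discreteness of the Dirichlet unit lattice). Finally, a nontrivial root of unity in $\Gm(\widetilde{L})_\divisible$ would make $\mu_{\ell^\infty}(\widetilde{L})$ infinite for some $\ell$, so $\Gm(\widetilde{L})_\divisible=\{1\}$, giving (a).

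The main obstacle --- and the reason the statement holds for $\overline{K}[\sigma]$ yet remains open for $\overline{K}(\sigma)$ when $e=1$ --- is precisely the availability of the inflation--restriction sequence, i.e.\ the Galois property of $\overline{K}[\sigma]/K$; absent it, the obstruction classes cannot be placed inside cohomology with coefficients in $A[\ell^m](\overline{K}[\sigma])$ and the nilpotency argument has no foothold. The remaining tasks --- assembling the several exceptional null sets (for $\ell$-power torsion, for $\ell$-power roots of unity, across all finite extensions) into one via countable unions, and the bookkeeping on composita, on finite generation of Mordell--Weil groups, and on unit groups --- are routine.
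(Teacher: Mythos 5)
Your plan is correct, and its skeleton coincides with the paper's: reduce via Proposition~\ref{CharacterizationOfKFfields} to (a) the toral statement and (b) the vanishing of $A(\overline{K}[\sigma])_\divisible$, exploit that $\overline{K}[\sigma]$ is Galois over every finite subextension of $K$ to convert divisibility into finiteness of $\ell$-primary torsion, and then invoke the finiteness of $A(\overline{K}(\sigma))[\ell^\infty]$ (Theorem~\ref{torptsoffinextofKbarsigma}~(2), ultimately Jacobson--Jarden). The difference is one of packaging. The paper carries out the passage from divisibility to torsion in a single citation of \cite[Proposition~2.4~(2)]{OT}; your inflation--restriction/Kummer-sequence argument --- obstruction classes $\delta_m(P)$ inflated from $H^1(\Gal(\overline{K}[\sigma]/K''),A[\ell^m](\overline{K}[\sigma]))$, stabilization of the coefficient modules to a finite group on which multiplication by $\ell$ is nilpotent, injectivity of inflation, and $\bigcap_m \ell^m A(K'')$ torsion by Mordell--Weil/Lang--N\'eron --- is in substance a proof of that cited proposition, and it is sound. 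For the toral half the paper cites Jarden's roots-of-unity theorem (Theorem~\ref{knownresultforKbarsigma}~(2-i)), whereas you derive the needed finiteness of $\mu_{\ell^\infty}(\widetilde{L})$ directly from the fact that $\chi_\ell(\sigma_1)$ has infinite order in $\mathbb{Z}_\ell^\times$ outside a null set; this is correct, makes that half self-contained, and renders your argument uniform in $e$ (the paper instead splits off $e\ge 2$ and deduces it from Theorem~\ref{mythm-KFforKbarsigma}). Your replacement of a finite extension $L$ of $\overline{K}[\sigma]$ by the compositum $\widetilde{L}=\overline{K}[\sigma]\cdot F$ with $F/K$ finite Galois is the right fix for the non-transitivity of Galoisness. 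What each route buys: the paper's proof is two lines given its references; yours isolates exactly where the Galois property of $\overline{K}[\sigma]/K$ is used (the availability of inflation--restriction), which is the honest explanation of why the result is within reach for $\overline{K}[\sigma]$ but remains open for $\overline{K}(\sigma)$ when $e=1$.
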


\begin{proof}
If $e \ge 2$, then the proposition follows from Corollary~\ref{mycor-FreenessOfMWGmodTor}.
Hence we may assume $e = 1$.
Since Kummer-faithfulness is preserved under finite extensions, we only discuss whether $\overline{K}[\sigma]$ is Kummer-faithful.
By Proposition~\ref{CharacterizationOfKFfields}, it suffices to show that the following two statements hold for almost all $\sigma \in G_K$:
\begin{enumerate}[(\alph{enumi})]
    \item $\Gm(L)_\divisible = 0$ for any finite extension $L$ of $\overline{K}[\sigma]$;
    \item $A(\overline{K}[\sigma])_\divisible = 0$ for any abelian variety $A$ over $\overline{K}[\sigma]$.
\end{enumerate}

Since $L$ is a Galois extension of some finite extension of $K$, the conditions $\Gm(L)_\divisible = 0$ and $A(\overline{K}[\sigma])_\divisible = 0$ respectively can be replaced with $(\Gm(L)_\tor)_\divisible = 0$ and that $A(\overline{K}[\sigma])[\ell^\infty]$ is finite for any prime number $\ell$~\cite[Proposition~2.4~(2)]{OT}.
Then Statement~(a) holds for almost all $\sigma \in G_K$ from Theorem~\ref{knownresultforKbarsigma}~(2-i).
Statement~(b) holds for almost all $\sigma \in G_K$ by Theorem~\ref{torptsoffinextofKbarsigma}~(2).
\end{proof}

In the remainder of this paper, we discuss the Kummer-faithfulness for the field $\overline{K}(\sigma)$.
By Proposition~\ref{CharacterizationOfKFfields}, we can divide the discussion into two parts; torally Kummer-faithfulness and the vanishing property of the divisible part of the Mordell--Weil groups of abelian varieties.
Jarden and Petersen~\cite[Remark~5.5]{JP22}, who proved the latter part when $e \ge 2$ (cf.\ Theorem~\ref{knownresultforKbarsigma}~(4)), pointed out that their proof does not work when $e = 1$.
We do not know at the time of writing this paper whether the Kummer-faithfulness property holds for any $e \ge 1$.
Here we give a partial result on the torally Kummer-faithfulness for any $e \ge 1$.

\begin{theorem}\label{mythm-TKF}
Suppose that $K$ is a finitely generated field over $\mathbb{Q}$ and $e$ is a positive integer.
Then, the group $\Gm(\overline{K}(\sigma))_\divisible$ is trivial for almost all $\sigma \in G_K^e$.
\end{theorem}

\begin{remark}\label{OpenProblemForKFness}
If we accepted the claim in~\cite[Theorem~1.11]{Mochizuki15} that the absolute Galois group of any Kummer-faithful field of characteristic zero is \textit{slim} (i.e., every open subgroup has trivial center), then the field $\overline{K}(\sigma)$ would \textit{not} be Kummer-faithful for \textit{any} $\sigma \in G_K$ since the absolute Galois group $G_{\overline{K}(\sigma)}$ of $\overline{K}(\sigma)$ is abelian; $G_{\overline{K}(\sigma)}$ is the closed subgroup $\langle\sigma\rangle$ in $G_K$ generated by $\sigma$.
However, Mochizuki recently informed us that there is a gap in the proof of this claim and the status remains open for general Kummer-faithful fields of characteristic zero.
If the Kummer-faithfulness result for $\overline{K}(\sigma)$ holds when $e = 1$, then it leads us to conclude that the above claim does not hold in the general case, although it provides no explicit counterexample.
It would also answer in the negative the questions~\cite[Remark~2.4.1, Questions~1 and~2]{MT} posed by Minamide and Tsujimura, which ask whether the absolute Galois group of any torally Kummer-faithful field is slim.
\end{remark}

\begin{remark}
For a finite field $\mathbb{F}$, the Kummer-faithfulness result on $\overline{\mathbb{F}}(\sigma)$ for $\sigma \in G_{\mathbb{F}}^e$ holds for every positive integer $e$.
Indeed, if $\sigma = (\sigma_1, \ldots, \sigma_e) \in G_{\mathbb{F}}^e$, then $\overline{\mathbb{F}}(\sigma) = \bigcap_{i = 1}^e \overline{\mathbb{F}}(\sigma_i)$ and it suffices to show the claim for $e = 1$.
In this case, the absolute Galois group $G_{\overline{\mathbb{F}}(\sigma)}$ of $\overline{\mathbb{F}}(\sigma)$ is the closed subgroup $\langle\sigma\rangle$ generated by $\sigma$ in $G_{\mathbb{F}}$ and it is isomorphic to $\widehat{\mathbb{Z}}$ for almost all $\sigma \in G_{\mathbb{F}}$~\cite[Lemma~7.1~(b)]{Jarden75}.
For such $\sigma$, the isomorphism $G_{\overline{\mathbb{F}}(\sigma)} \cong \widehat{\mathbb{Z}}$ implies the Kummer-faithfulness of $\overline{\mathbb{F}}(\sigma)$~\cite[Theorem~B~(ii)]{Murotani}.
\end{remark}

It is obvious that we may assume $e = 1$ to prove Theorem~\ref{mythm-TKF}.
Before proving the theorem, we need a lemma concerning the degree of the splitting field of the polynomial $X^n - a$ with some conditions on $a \in K$ and a positive integer $n$.

A family $\{B_i\}_{i \in I}$ of measurable subsets of $G_K$ is \textit{$\mu$-independent} if $\mu(\bigcap_{i \in J} B_i) = \prod_{i \in J} \mu(B_i)$ for every finite subset $J$ in $I$.
A family $\{L_i\}_{i \in I}$ of finite separable extensions of a field $K$ is \textit{linearly disjoint} over $K$ if $[K(\bigcup_{i \in J} L_i) : K] = \prod_{i \in J} [L_i : K]$ for every finite subset $J$ in $I$.
This occurs if and only if the family $\{G_{L_i}\}_{i \in I}$ of open subgroups of $G_K$ is $\mu$-independent~\cite[Lemma~21.5.1]{FriedJ}.

\begin{lemma}\label{lem-LDnessandExt}
Let $k$ be a number field.
There exists a positive integer $m_0$ such that the family $\{k\} \cup \{\mathbb{Q}(\zeta_\ell)\}_{\ell \ge m_0:\mathrm{prime}}$ is linearly disjoint over $\mathbb{Q}$.
\end{lemma}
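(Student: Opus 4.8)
The plan is to exploit the fact that a number field $k$ is ramified only at finitely many primes, while $\mathbb{Q}(\zeta_\ell)$ for a prime $\ell$ is ramified exactly at $\ell$ (and, if $\ell$ is odd, totally ramified there). First I would let $S$ be the (finite) set of rational primes ramified in $k$, and set $m_0$ to be any integer strictly larger than every prime in $S$ (for definiteness one may also take $m_0 \geq 3$). I claim the family $\{k\} \cup \{\mathbb{Q}(\zeta_\ell)\}_{\ell \geq m_0 : \mathrm{prime}}$ is linearly disjoint over $\mathbb{Q}$; equivalently, by \cite[Lemma~21.5.1]{FriedJ}, the family of open subgroups $\{G_k\} \cup \{G_{\mathbb{Q}(\zeta_\ell)}\}_{\ell \geq m_0}$ of $G_\mathbb{Q}$ is $\mu_{G_\mathbb{Q}}$-independent.

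The key step is to verify linear disjointness for an arbitrary finite subfamily. Take a finite set $T$ of primes $\geq m_0$; I would show that $k$ and the compositum $F = \mathbb{Q}(\zeta_\ell \mid \ell \in T) = \mathbb{Q}(\zeta_N)$ with $N = \prod_{\ell \in T}\ell$ are linearly disjoint over $\mathbb{Q}$, and that the fields $\{\mathbb{Q}(\zeta_\ell)\}_{\ell \in T}$ are mutually linearly disjoint over $\mathbb{Q}$. The latter is classical: $[\mathbb{Q}(\zeta_N):\mathbb{Q}] = \varphi(N) = \prod_{\ell \in T}(\ell-1) = \prod_{\ell \in T}[\mathbb{Q}(\zeta_\ell):\mathbb{Q}]$ since the $\ell$ are distinct primes, which is exactly the disjointness condition for that subfamily. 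For the disjointness of $k$ and $F$: the intersection $k \cap F$ is a subfield of the abelian (indeed cyclotomic) field $F$, hence is itself abelian over $\mathbb{Q}$ and unramified outside $T$; but $k \cap F \subseteq k$ is unramified outside $S$, and $S \cap T = \emptyset$ by the choice of $m_0$, so $k \cap F$ is unramified at every prime, whence $k \cap F = \mathbb{Q}$ by Minkowski's theorem. Since $F/\mathbb{Q}$ is Galois, $k \cap F = \mathbb{Q}$ already gives $[kF : \mathbb{Q}] = [k:\mathbb{Q}]\,[F:\mathbb{Q}]$, i.e.\ $k$ and $F$ are linearly disjoint over $\mathbb{Q}$.

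Finally I would assemble these: combining the disjointness of $k$ from $F$ with the mutual disjointness of the $\mathbb{Q}(\zeta_\ell)$ inside $F$ gives $[k(\zeta_\ell \mid \ell \in T):\mathbb{Q}] = [k:\mathbb{Q}]\prod_{\ell \in T}[\mathbb{Q}(\zeta_\ell):\mathbb{Q}]$, which is precisely linear disjointness of $\{k\}\cup\{\mathbb{Q}(\zeta_\ell)\}_{\ell\in T}$ over $\mathbb{Q}$. As $T$ was an arbitrary finite subset, the whole family is linearly disjoint. The only mild obstacle is bookkeeping the ramification argument carefully — in particular noting that a subfield of a Galois extension need not itself be Galois in general, so one really does want to land inside the abelian field $F$ where every subfield is abelian over $\mathbb{Q}$ and Minkowski applies; everything else is standard cyclotomic arithmetic.
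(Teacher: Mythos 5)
Your proof is correct, but it takes a genuinely different route from the paper's. The paper's proof is two lines: it notes that the fields $\mathbb{Q}(\zeta_\ell)$, $\ell$ prime, are linearly disjoint over $\mathbb{Q}$ and then invokes the general lemma \cite[Lemma~3.1.10]{FriedJ}, which says that given a linearly disjoint family of Galois extensions of a field $K$ and a finite separable extension of $K$, one may discard finitely many members of the family so that the remaining ones together with the finite extension are still linearly disjoint; that argument is purely field-theoretic and supplies no information about which members must be discarded. You instead make the exceptional set explicit: taking $m_0$ larger than $2$ and than every prime ramified in $k$, you show $k \cap \mathbb{Q}(\zeta_N) = \mathbb{Q}$ for $N$ a product of distinct primes $\ell \ge m_0$ because the intersection is unramified at every rational prime and Minkowski's theorem applies, and then the Galoisness of $\mathbb{Q}(\zeta_N)/\mathbb{Q}$ together with the multiplicativity of $\varphi$ gives the degree identity required for every finite subfamily (you correctly handle the subfamilies both with and without $k$). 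What your approach buys is an explicit, effective $m_0$; what it costs is that it is specific to number fields, whereas the Fried--Jarden lemma is general. One minor point: the detour through the abelianness of $k \cap F$ is unnecessary --- a subfield of an extension unramified outside $T$ is unramified outside $T$ with no Galois hypothesis, and Minkowski's theorem applies to an arbitrary number field, so the argument closes without that remark.
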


\begin{proof}
We know that $\mathbb{Q}(\zeta_\ell)$ for all prime numbers $\ell$ are linearly disjoint over $\mathbb{Q}$.
Hence the lemma follows from~\cite[Lemma~3.1.10]{FriedJ}.
\end{proof}

Let $\varphi$ denote the Euler totient function, i.e., $\varphi(n)$ for a positive integer $n$ is the number of positive integers less than or equal to $n$ that are coprime to $n$.

\begin{lemma}\label{lem-mathcalLa}
Let $K$ be a finitely generated field over $\mathbb{Q}$ and $a$ an element in $K^\times$ which is not a root of unity.
Then there exists a set $\Lambda_a$ of prime numbers such that:
\begin{itemize}
    \item $\sum_{\ell \in \Lambda_a} 1 / \ell = \infty$;
    \item if $n = \ell_1 \cdots \ell_r$, where $\ell_1, \ldots, \ell_r$ are distinct prime numbers in $\Lambda_a$, then the splitting field of $X^n - a$ over $K$ is of degree $n \varphi(n)$ over $K$.
\end{itemize}
\end{lemma}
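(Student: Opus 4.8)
The plan is to build $\Lambda_a$ from primes $\ell$ for which $a$ fails to be an $\ell$-th power in $K(\zeta_\ell)$, so that $X^\ell - a$ is irreducible over $K(\zeta_\ell)$, and then to control the compositum over many such primes simultaneously. First I would reduce to the case where $K$ is a number field: replacing $K$ by its algebraic closure $k$ of $\mathbb{Q}$ inside $K$ only shrinks the relevant degrees, so it suffices to produce $\Lambda_a$ working over $k$; here I should be slightly careful, since $a$ lies in $K$, not necessarily in $k$, but the point is that the splitting field degree $[K(\zeta_n, a^{1/n}) : K]$ is governed by how $a$ sits relative to the cyclotomic tower, and the Kummer-theoretic input (non-$\ell$-th-power-ness) is what matters. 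Using that $a$ is not a root of unity, a standard argument (e.g.\ via heights, or via the fact that the group generated by $a$ in $K^\times / (K^\times)_{\mathrm{tor}}$ is infinite cyclic) gives a prime $\ell_0$ such that for all primes $\ell \ge \ell_0$, $a \notin (K^\times)^\ell$, and in fact $a \notin (K(\zeta_\ell)^\times)^\ell$ after possibly enlarging $\ell_0$ — the latter because $[K(\zeta_\ell):K]$ divides $\ell - 1$, which is prime to $\ell$, so an $\ell$-th power in $K(\zeta_\ell)$ that is $G$-invariant descends to an $\ell$-th power in $K$. By Lang's theorem on $X^n - a$ (cited in the excerpt as \cite[Chapter~VI, Theorem~9.1]{Lang02}), $X^\ell - a$ is then irreducible over $K(\zeta_\ell)$, so $[K(\zeta_\ell, a^{1/\ell}) : K] = \ell(\ell-1) = \ell\varphi(\ell)$.

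Next I would take $\Lambda_a$ to be the set of \emph{all} primes $\ell \ge \ell_0$ (so $\sum_{\ell \in \Lambda_a} 1/\ell = \infty$ by Euler/Mertens) and verify the multiplicative statement for $n = \ell_1 \cdots \ell_r$ with distinct $\ell_i \in \Lambda_a$. The splitting field of $X^n - a$ over $K$ is $K(\zeta_n, a^{1/n})$, and since the $\ell_i$ are coprime, $a^{1/n}$ is built from the $a^{1/\ell_i}$; so I want
\[
[K(\zeta_n, a^{1/n}) : K] \;=\; \prod_{i=1}^r [K(\zeta_{\ell_i}, a^{1/\ell_i}) : K] \;=\; \prod_{i=1}^r \ell_i(\ell_i - 1) \;=\; n\,\varphi(n),
\]
using $\varphi(n) = \prod_i (\ell_i - 1) = \prod_i \varphi(\ell_i)$. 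The inequality $\le$ is automatic; for $\ge$ it suffices that the fields $K(\zeta_{\ell_i}, a^{1/\ell_i})$ are linearly disjoint over $K$. This is where Lemma~\ref{lem-LDnessandExt} enters: after enlarging $\ell_0$ so that $\ell_0 \ge m_0$ (the constant from that lemma applied to a number field containing the relevant algebraic data over $\mathbb{Q}$, e.g.\ $K$'s constant field together with a splitting field of a small power of $a$), the cyclotomic fields $\mathbb{Q}(\zeta_{\ell_i})$ are linearly disjoint over $\mathbb{Q}$ and remain so over $K$. One then combines the cyclotomic linear disjointness with the Kummer part: the degree $[K(\zeta_n, a^{1/n}) : K(\zeta_n)]$ equals $\prod_i [K(\zeta_n, a^{1/\ell_i}) : K(\zeta_n)]$, and each factor is $\ell_i$ because $a$ is still not an $\ell_i$-th power even after adjoining $\zeta_n$ — here again the fact that $[K(\zeta_n):K(\zeta_{\ell_i})]$ is prime to $\ell_i$ forces an $\ell_i$-th power in $K(\zeta_n)$ to descend.

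The main obstacle I anticipate is the bookkeeping around ``$a$ not an $\ell$-th power'' being stable under adjoining roots of unity and under taking the compositum of several Kummer extensions — i.e.\ making the coprimality/descent arguments airtight so that no unexpected degree drop occurs. Concretely, the delicate point is showing $a \notin (K(\zeta_n)^\times)^{\ell_i}$ for all the $\ell_i$ dividing $n$ at once; the clean way is: if $a = b^{\ell_i}$ with $b \in K(\zeta_n)$, apply the norm $N_{K(\zeta_n)/K(\zeta_{\ell_i})}$ to get $a^{[K(\zeta_n):K(\zeta_{\ell_i})]} \in (K(\zeta_{\ell_i})^\times)^{\ell_i}$, and since $\gcd([K(\zeta_n):K(\zeta_{\ell_i})], \ell_i) = 1$ (as $[K(\zeta_n):K(\zeta_{\ell_i})]$ divides $\prod_{j \ne i}(\ell_j - 1)\cdot[K(\zeta_{\ell_i}):K]^{-1}\cdots$, all prime to $\ell_i$), a Bézout argument yields $a \in (K(\zeta_{\ell_i})^\times)^{\ell_i}$, contradicting the choice of $\ell_0$. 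Everything else — the divergence of $\sum 1/\ell$, the elementary totient identity, and the reduction to a number field — is routine. I would present the argument in the order: (1) produce $\ell_0$ via the non-torsion hypothesis; (2) handle a single prime via Lang's irreducibility criterion; (3) set $\Lambda_a = \{\ell \text{ prime} : \ell \ge \ell_0\}$ and invoke Lemma~\ref{lem-LDnessandExt}; (4) assemble the compositum degree by the two-step (cyclotomic, then Kummer) descent computation.
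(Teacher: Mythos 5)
Your overall scaffolding is the same as the paper's: produce $\ell_0$ from the non-torsion hypothesis, invoke Lemma~\ref{lem-LDnessandExt} to get $m_0$ so that the relevant cyclotomic fields are linearly disjoint from the constant field, take $\Lambda_a$ to be all primes past $\max\{\ell_0,m_0\}$, and note that $[K(\zeta_n):K]=\varphi(n)$. The difference is in the last step: the paper simply observes that $n$ is odd and cites \cite[Chapter~VI, Theorem~9.4]{Lang02}, which says precisely that ``$[K(\zeta_n):K]=\varphi(n)$ plus $a$ not a $p$-th power for each $p\mid n$ (with $n$ odd)'' forces the splitting field of $X^n-a$ to have degree $n\varphi(n)$. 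You instead try to re-derive this implication by hand, and your derivation contains a genuine error.

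The problem is your claim that $\gcd\bigl([K(\zeta_n):K(\zeta_{\ell_i})],\,\ell_i\bigr)=1$ because the factors $\ell_j-1$ ($j\neq i$) are ``all prime to $\ell_i$.'' This is false: there is nothing preventing $\ell_j\equiv 1\pmod{\ell_i}$ (e.g.\ $\ell_i=3$, $\ell_j=7$), and by Dirichlet such $\ell_j$ occur with positive density, so you cannot discard them without destroying $\sum_{\ell\in\Lambda_a}1/\ell=\infty$. Consequently the norm-plus-B\'ezout descent, which works fine for a single prime (where the relevant degree divides $\ell-1$), breaks down exactly at the ``delicate point'' you flagged, and the inequality $[K(\zeta_n,a^{1/n}):K(\zeta_n)]\ge n$ is left unproved. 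The statement you need is still true, but for a different reason: if $a=b^{\ell_i}$ with $b\in K(\zeta_n)$, then $K(b)/K$ is a subextension of an \emph{abelian} extension, hence Galois; since $X^{\ell_i}-a$ is irreducible over $K$, the field $K(b)$ has degree $\ell_i$ and must contain all roots of $X^{\ell_i}-a$, hence $\zeta_{\ell_i}$, forcing $(\ell_i-1)\mid\ell_i$ and so $\ell_i=2$ --- impossible once you exclude $2$ from $\Lambda_a$. This abelianness argument (or, more efficiently, the direct citation of Lang's Theorem~9.4, as the paper does) repairs the gap; note also that you should explicitly discard $\ell=2$, both for this step and because Lang's irreducibility and splitting-field statements carry an extra hypothesis when $n$ is even.
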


\begin{proof}
Since $a$ is not a root of unity, we can take a prime number $\ell_0$ such that $a$ is not an $\ell$-th power in $K$ for all prime numbers $\ell \ge \ell_0$.
Let $k$ be the algebraic closure of $\mathbb{Q}$ in $K$.
By Lemma~\ref{lem-LDnessandExt}, there exists a positive integer $m_0$ such that $k$ and $\mathbb{Q}(\zeta_\ell)$ for all prime numbers $\ell \ge m_0$ are linearly disjoint over $\mathbb{Q}$.
We show that the set of prime numbers $\ge \max\{\ell_0, m_0, 3\}$ has the desired property.
The first condition is automatically satisfied since all but finitely many prime numbers belong to this set.
Let $n = \ell_1 \cdots \ell_r$, where $\ell_1, \ldots, \ell_r$ are distinct prime numbers $\ge \max\{\ell_0, m_0, 3\}$.
Since $\mathbb{Q}(\zeta_{\ell_1}), \ldots, \mathbb{Q}(\zeta_{\ell_r})$, and $k$ are linearly disjoint over $\mathbb{Q}$, we have $[k(\zeta_n) : k] = [\mathbb{Q}(\zeta_n) : \mathbb{Q}] = \varphi(n)$.
We also find $[K(\zeta_n) : K] = [k(\zeta_n) : k] = \varphi(n)$ since $K \cap k(\zeta_n) = k$.
Note that $n$ is odd as $\ell_i \ge 3$ for all $i$.
The lemma now follows from~\cite[Chapter~VI, Theorem~9.4]{Lang02}.
\end{proof}

Let $S$ be the set of $\sigma \in G_K$ for which the statement in Theorem~\ref{mythm-TKF} does not hold.
Then what we need to show is that $S$ has measure zero in $G_K$.
For any $a \in \overline{K}^\times \smallsetminus \{1\}$, let
\[ S_a = \{\sigma \in G_K \mid a \in \Gm(\overline{K}(\sigma))_\divisible\}. \]
Then we have
\[ S = \bigcup_{a \in \overline{K}^\times \smallsetminus \{1\}} S_a. \]
Since the set $\overline{K}^\times \smallsetminus \{1\}$ is countable, it suffices to show that $S_a$ has measure zero for any $a \in \overline{K}^\times \smallsetminus \{1\}$.

Suppose that $a$ is a root of unity.
Remark that a root $a \neq 1$ of unity belonging to $\Gm(M)_\divisible$ for a field $M$ is equivalent to that $M$ contains all $\ell$-power roots of unity for some prime number $\ell$.
We prove a stronger result than we need here.

\begin{proposition}\label{prop-lpowerunity}
Suppose that $K$ is a finitely generated field over $\mathbb{Q}$.
Then, for almost all $\sigma \in G_K$, any finite extension $M$ of $\overline{K}(\sigma)$, and any prime number $\ell$, the multiplicative group $M^\times$ does not contain all $\ell$-power roots of unity.
\end{proposition}

\begin{proof}
By Steps~1 and~2 in~\cite[Section~8]{Jarden75}, we only consider the case where $K = \mathbb{Q}$.
In this case, it is sufficient to show that, for any prime number $\ell$ and any positive integer $N$, the set
\[ E(N, \ell) = \{\sigma \in G_\mathbb{Q} \mid [\overline{\mathbb{Q}}(\sigma)(\zeta_{\ell^\infty}) : \overline{\mathbb{Q}}(\sigma)] \le N\} \]
has measure zero in $G_\mathbb{Q}$.
Here $\overline{\mathbb{Q}}(\sigma)(\zeta_{\ell^\infty}) = \bigcup_{m = 1}^\infty \overline{\mathbb{Q}}(\sigma)(\zeta_{\ell^m})$.
We will prove this by showing that the measure of the set
\[ E(N, \ell, m) = \{\sigma \in G_\mathbb{Q} \mid [\overline{\mathbb{Q}}(\sigma)(\zeta_{\ell^m}) : \overline{\mathbb{Q}}(\sigma)] \le N\} \]
tends to zero as $m \to \infty$.
Suppose that $\ell$ is odd.
By~\cite[Lemma~3.1]{Jarden75}, we have
\[ \mu(E(N, \ell, m)) = \frac{1}{\varphi(\ell^m)} \sum_{d \le N; d \mid \varphi(\ell^m)} \varphi(d) \le \frac{1}{\ell^{m - 1} (\ell - 1)} \sum_{d \le N} \varphi(d). \]
Since the rightmost hand side goes to zero as $m \to \infty$, we have $\mu(E(N, \ell)) = 0$.
Suppose that $\ell = 2$.
We use \cite[Lemma~5.1]{Jarden75} with $e = 1, \theta = 1 / 3$.
Then we have
\[ \mu(E(N, 2, m)) \le \mu(E(2^{(m - 1) / 3}, 2, m)) \le \frac{c}{2^{m / 3}}, \]
where the first inequality holds for all $m \ge 3 \log N / \log 2 + 1$ and $c$ is a positive constant independent of $m$.
Taking $m \to \infty$ leads us to conclude that $\mu(E(N, 2)) = 0$.
\end{proof}

Suppose that $a$ is not a root of unity.
For a prime number $\ell$, put
\[ T_a^{(\ell)} = \{\sigma \in G_K \mid \text{some $\ell$-th root of $a$ belongs to }\overline{K}(\sigma)\}. \]
We see that
\[ S_a \subset \bigcap_{\ell\text{: prime}} T_a^{(\ell)}. \]
If we prove that, for almost all $\sigma \in G_K$, there exists a prime number $\ell$ such that $\sigma \notin T_a^{(\ell)}$, then it holds that $S_a$ has measure zero and the proof is completed.
It is easily seen that each $\sigma \in T_a^{(\ell)}$ fixes $a$, that is, $T_a^{(\ell)} \subset G_{K(a)}$.
Replacing $K$ with $K(a)$, we may assume that $a \in K^\times$.

Let $\alpha^{(\ell)}_1, \ldots, \alpha^{(\ell)}_\ell$ be all solutions of $X^\ell = a$ in $\overline{K}$ and $T_a^{(\ell, i)} (1 \le i \le \ell)$ be the set of $\sigma \in G_K$ such that $\alpha^{(\ell)}_i \in \overline{K}(\sigma)$.
Then we have $T_a^{(\ell)} = \bigcup_{i = 1}^\ell T_a^{(\ell, i)}$ and $\bigcap_{i = 1}^\ell T_a^{(\ell, i)} = G_{K(\sqrt[\ell]{a})}$, where $K(\sqrt[\ell]{a}) = K(\{\alpha_i^{(\ell)} \mid 1 \le i \le \ell\})$.

Let $\Lambda_a$ be the set of prime numbers obtained from Lemma~\ref{lem-mathcalLa}.

\begin{lemma}\label{lem-MeasureofTal}
We have $\mu(T_a^{(\ell)}) = 1 - 1 / \ell$ for all $\ell \in \Lambda_a$.
\end{lemma}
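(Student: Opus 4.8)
The plan is to compute $\mu(T_a^{(\ell)})$ by descending to the finite Galois group $G := \Gal(M/K)$, where $M := K(\sqrt[\ell]{a})$ is the splitting field of $X^\ell - a$ over $K$, and then solving a small counting problem inside $G$.

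First I would record the structural facts we need. Since $\ell \in \Lambda_a$, Lemma~\ref{lem-mathcalLa} (applied with $n = \ell$, $r = 1$) gives $[M:K] = \ell\varphi(\ell) = \ell(\ell-1)$; moreover $a$ is not an $\ell$-th power in $K$, so by \cite[Chapter~VI, Theorem~9.1]{Lang02} the polynomial $X^\ell - a$ is irreducible over $K$ (here we use that $\ell$ is an odd prime, so $4 \nmid \ell$ and the exceptional case of Lang's criterion does not arise), whence $[K(\alpha^{(\ell)}_i):K] = \ell$ for every $i$. Every element of $G_M = \Gal(\overline{K}/M)$ fixes all roots of $X^\ell - a$, so each $T_a^{(\ell,i)}$ is a union of cosets of $G_M$, i.e. the preimage under the restriction map $\pi \colon G_K \to G$ of the stabilizer $\overline{T}_i := \{\tau \in G \mid \tau(\alpha^{(\ell)}_i) = \alpha^{(\ell)}_i\}$ of the root $\alpha^{(\ell)}_i$ under the natural action of $G$ on the set of roots. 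Consequently $T_a^{(\ell)}$ is the preimage of $\overline{F} := \bigcup_{i=1}^\ell \overline{T}_i$, the set of $\tau \in G$ fixing \emph{some} root. Since $\pi$ carries $\mu_{G_K}$ onto the uniform probability measure on $G$, it suffices to prove $|\overline{F}| = (\ell-1)^2$, for then $\mu(T_a^{(\ell)}) = (\ell-1)^2 / (\ell(\ell-1)) = 1 - 1/\ell$.

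Next I would identify $G$ with the group of affine transformations of $\mathbb{F}_\ell$. Fix one root $\alpha$ of $X^\ell - a$ and index the roots by $j \in \mathbb{F}_\ell$ via $\alpha_j := \zeta_\ell^{\,j}\alpha$ (so $\alpha = \alpha_0$). Each $\tau \in G$ is determined by a pair $(b,c) \in \mathbb{F}_\ell^\times \times \mathbb{F}_\ell$ with $\tau(\zeta_\ell) = \zeta_\ell^{\,b}$ and $\tau(\alpha) = \alpha_c$, and then $\tau(\alpha_j) = \alpha_{bj+c}$; one checks that $\tau \mapsto (x \mapsto bx + c)$ is an injective homomorphism of $G$ into the affine group of $\mathbb{F}_\ell$, which is an isomorphism by the order count $|G| = \ell(\ell-1)$. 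Under this isomorphism $\overline{F}$ corresponds to the set of affine maps $x \mapsto bx + c$ admitting a fixed point in $\mathbb{F}_\ell$: a (unique) fixed point exists whenever $b \neq 1$, while among the translations ($b = 1$) only the identity has one. Hence exactly the $\ell - 1$ nontrivial translations lie outside $\overline{F}$, so $|\overline{F}| = \ell(\ell-1) - (\ell-1) = (\ell-1)^2$, which completes the computation.

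I do not expect a genuine obstacle: the argument is elementary once the passage to the finite quotient $G$ is justified. The two points deserving a little care are the irreducibility of $X^\ell - a$ over $K$ — which is precisely where the defining properties of $\Lambda_a$ and the oddness of $\ell$ enter — and the measure-theoretic reduction, i.e. that each $T_a^{(\ell,i)}$ is a union of cosets of $G_M$ and therefore descends to $G$. If one prefers to avoid the affine-group identification, the count $|\overline{F}| = (\ell-1)^2$ can instead be obtained by inclusion–exclusion: for $i \neq j$ any $\sigma \in T_a^{(\ell,i)} \cap T_a^{(\ell,j)}$ fixes $\alpha^{(\ell)}_i / \alpha^{(\ell)}_j$, a primitive $\ell$-th root of unity, hence fixes $\zeta_\ell$ and therefore all roots, so the sets $T_a^{(\ell,i)}$ meet pairwise (and in all higher intersections) exactly in $G_M$; combining $\mu(T_a^{(\ell,i)}) = 1/\ell$ with $\mu(G_M) = 1/(\ell(\ell-1))$ then gives $\mu(T_a^{(\ell)}) = \mu(G_M) + \ell\bigl(\frac1\ell - \frac{1}{\ell(\ell-1)}\bigr) = 1 - 1/\ell$.
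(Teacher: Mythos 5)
Your proposal is correct. Your primary route differs in presentation from the paper's: you push the whole computation down to the finite quotient $G=\Gal(K(\sqrt[\ell]{a})/K)$, identify it with the affine group of $\mathbb{F}_\ell$ (legitimate, since Lemma~\ref{lem-mathcalLa} gives $[K(\sqrt[\ell]{a}):K]=\ell(\ell-1)$, forcing $G$ to be the full affine group), and count the $(\ell-1)^2$ affine maps with a fixed point; this is clean and makes the answer $1-1/\ell$ transparent as ``the proportion of affine maps that are not fixed-point-free translations.'' The paper instead works directly with the subsets $T_a^{(\ell,i)}=G_{K(\alpha_i^{(\ell)})}$ of $G_K$: it shows that any two of them intersect exactly in $G_{K(\sqrt[\ell]{a})}$ (because fixing two roots fixes their ratio $\zeta_\ell$ and hence all roots), partitions $T_a^{(\ell)}$ accordingly, and sums measures. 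That is precisely the inclusion--exclusion variant you sketch in your final sentence, with the same key observation about ratios of roots; so the two arguments are equivalent in substance, and your affine-group version merely repackages the counting. Both hinge on the same inputs --- irreducibility of $X^\ell-a$ over $K$ (giving $\mu(T_a^{(\ell,i)})=1/\ell$) and the degree $\ell\varphi(\ell)$ of the splitting field (giving $\mu(G_{K(\sqrt[\ell]{a})})=1/(\ell(\ell-1))$) --- both of which are exactly what membership of $\ell$ in $\Lambda_a$ guarantees.
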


\begin{proof}
Note that $K$ contains none of $\alpha_i^{(\ell)}$ for all $i$.
Then
\[ \mu\left(T_a^{(\ell, i)}\right) = \frac{1}{\Big[K(\alpha_i^{(\ell)}) : K\Big]} = \frac{1}{\ell}. \]
Now we have
\[ T_a^{(\ell, j)} \cap T_a^{(\ell, j')} = G_{K(\sqrt[\ell]{a})} \]
for $j \neq j'$.
Indeed, any $\sigma$ in the left hand side fixes $\alpha_j^{(\ell)} / \alpha_{j'}^{(\ell)}$, which is a primitive $\ell$-th root of unity.
Thus $\sigma$ fixes $\alpha_i^{(\ell)}$ for all $i$, which implies $\sigma \in G_{K(\sqrt[\ell]{a})}$.
The inverse inclusion is obvious.

Therefore the $\ell + 1$ sets
\[ T_a^{(\ell, 1)} \smallsetminus G_{K(\sqrt[\ell]{a})}, \ldots, T_a^{(\ell, \ell)} \smallsetminus G_{K(\sqrt[\ell]{a})}, \text{ and } G_{K(\sqrt[\ell]{a})} \]
give a partition of $T_a^{(\ell)}$.
We calculate
\[ \mu\left(G_{K(\sqrt[\ell]{a})}\right) = \frac{1}{[K(\sqrt[\ell]{a}) : K]} = \frac{1}{\ell \varphi(\ell)} = \frac{1}{\ell (\ell - 1)}. \]
Hence
\begin{align*}
    \mu\left(T_a^{(\ell)}\right) &= \mu\left(\bigcup_{i = 1}^\ell \left(T_a^{(\ell, i)} \smallsetminus G_{K(\sqrt[\ell]{a})}\right) \cup G_{K(\sqrt[\ell]{a})}\right) \\
    &= \sum_{i = 1}^\ell \mu\left(T_a^{(\ell, i)} \smallsetminus G_{K(\sqrt[\ell]{a})}\right) + \mu\left(G_{K(\sqrt[\ell]{a})}\right) \\
    &= \sum_{i = 1}^\ell \mu\left(T_a^{(\ell, i)}\right) - (\ell - 1) \mu\left(G_{K(\sqrt[\ell]{a})}\right) \\
    &= \ell \cdot \frac{1}{\ell} - (\ell - 1) \cdot \frac{1}{\ell (\ell - 1)} = 1 - \frac{1}{\ell},
\end{align*}
as desired.
\end{proof}

\begin{lemma}\label{lem-independence}
The family $\{T_a^{(\ell)}\}_{\ell \in \Lambda_a}$ is $\mu$-independent.
\end{lemma}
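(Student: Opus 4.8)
The plan is to derive the $\mu$-independence of $\{T_a^{(\ell)}\}_{\ell\in\Lambda_a}$ from the linear disjointness over $K$ of the splitting fields $K(\sqrt[\ell]{a})$, which will itself come out of the degree formula in Lemma~\ref{lem-mathcalLa}. Fix a finite subset $\{\ell_1,\dots,\ell_r\}\subseteq\Lambda_a$, put $n=\ell_1\cdots\ell_r$, and let $L_j=K(\sqrt[\ell_j]{a})$ be the splitting field of $X^{\ell_j}-a$ over $K$ (a finite Galois extension, since we are in characteristic zero). The first observation is that $T_a^{(\ell_j)}$ depends only on $\sigma|_{L_j}$, because all the roots $\alpha^{(\ell_j)}_1,\dots,\alpha^{(\ell_j)}_{\ell_j}$ lie in $L_j$; hence $T_a^{(\ell_j)}=q_j^{-1}(E_j)$, where $q_j\colon G_K\twoheadrightarrow\Gal(L_j/K)$ is the restriction map and $E_j=\{\tau\in\Gal(L_j/K)\mid \tau\text{ fixes some }\alpha^{(\ell_j)}_i\}$; in particular $\mu(T_a^{(\ell_j)})=|E_j|/|\Gal(L_j/K)|$ (which equals $1-1/\ell_j$ by Lemma~\ref{lem-MeasureofTal}, though the precise value is not needed here).

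Next I would show that $L_1,\dots,L_r$ are linearly disjoint over $K$. One checks that their composite $N=L_1\cdots L_r$ equals $K(\sqrt[n]{a})$: it contains $\zeta_n$ because $n$ is squarefree and $\zeta_n$ is a product of powers of the $\zeta_{\ell_j}\in L_j$, and it contains $\sqrt[n]{a}$ because a suitable product of chosen roots $\sqrt[\ell_j]{a}$ differs from $(\sqrt[n]{a})^{\,s}$ by a root of unity, where $s=\sum_j n/\ell_j$ is coprime to $n$ (being a unit modulo each $\ell_j$), so that a Bézout relation for $s$ and $n$ recovers $\sqrt[n]{a}$ inside $N$. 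Then Lemma~\ref{lem-mathcalLa} gives $[N:K]=n\varphi(n)=\prod_j\ell_j\varphi(\ell_j)=\prod_j[L_j:K]$ (the inner equalities again from Lemma~\ref{lem-mathcalLa} applied to the single primes $\ell_j$), which is exactly linear disjointness; equivalently, the restriction map $\Gal(N/K)\xrightarrow{\ \sim\ }\prod_{j=1}^r\Gal(L_j/K)$ is an isomorphism.

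Finally I would transport the sets through this product decomposition. With $q\colon G_K\twoheadrightarrow\Gal(N/K)\cong\prod_j\Gal(L_j/K)$ and $\pi_j$ the $j$-th projection, we have $q_j=\pi_j\circ q$, so $T_a^{(\ell_j)}=q^{-1}(\pi_j^{-1}(E_j))$ and therefore
\[
\bigcap_{j=1}^r T_a^{(\ell_j)}=q^{-1}\!\Big(\prod_{j=1}^r E_j\Big).
\]
Since $\mu(q^{-1}(S))=|S|/|\Gal(N/K)|$ for every subset $S\subseteq\Gal(N/K)$, this yields
\[
\mu\!\Big(\bigcap_{j=1}^r T_a^{(\ell_j)}\Big)=\frac{\prod_j|E_j|}{\prod_j|\Gal(L_j/K)|}=\prod_{j=1}^r\mu\big(T_a^{(\ell_j)}\big),
\]
which is the required independence. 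I expect the main obstacle to be the middle step — pinning down that $N=K(\sqrt[n]{a})$, so that the degree formula of Lemma~\ref{lem-mathcalLa} actually applies and forces linear disjointness; after that, writing each $T_a^{(\ell_j)}$ as a cylinder set over the $j$-th factor of $\prod_j\Gal(L_j/K)$ and the ensuing count are routine. (Alternatively one could invoke \cite[Lemma~21.5.1]{FriedJ} to rephrase linear disjointness as $\mu$-independence of the subgroups $G_{L_j}$, but the explicit product decomposition above is the most direct route to independence of the $T_a^{(\ell_j)}$ themselves.)
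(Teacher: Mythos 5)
Your proof is correct and follows essentially the same route as the paper: both arguments establish linear disjointness of the splitting fields $K(\sqrt[\ell]{a})$ over $K$ from the degree formula $[K(\sqrt[n]{a}):K]=n\varphi(n)$ of Lemma~\ref{lem-mathcalLa}, and both then exploit that each $T_a^{(\ell)}$ is a cylinder set determined by $\sigma|_{K(\sqrt[\ell]{a})}$. The only difference is presentational: where you spell out the compositum identity $L_1\cdots L_r=K(\sqrt[n]{a})$ and the product decomposition $\Gal(N/K)\cong\prod_j\Gal(L_j/K)$ by hand, the paper leaves the former implicit and delegates the final independence step to \cite[Lemma~21.3.7]{FriedJ}.
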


\begin{proof}
Let $\ell_1, \ldots, \ell_r$ be distinct prime numbers in $\Lambda_a$ and $n = \ell_1 \cdots \ell_r$.
Then
\begin{align*}
    \mu\left(\bigcap_{i = 1}^r G_{K(\sqrt[\uproot{3}\ell_i]{a})}\right) &= \mu\left(\vphantom{G_{K(\sqrt[\uproot{3}l_i]{a})}}G_{K(\sqrt[n]{a})}\right) = \frac{1}{[K(\sqrt[n]{a}) : K]} = \frac{1}{n \varphi(n)} \\
    &= \prod_{i = 1}^r \frac{1}{\ell_i \varphi(\ell_i)} = \prod_{i = 1}^r \mu\left(G_{K(\sqrt[\uproot{3}\ell_i]{a})}\right).
\end{align*}
Hence $\{G_{K(\sqrt[\ell]{a})}\}_{\ell \in \Lambda_a}$ is $\mu$-independent.
We notice that whether $\sigma \in G_K$ belongs to $T_a^{(\ell)}$ determines only by $\sigma(\alpha_i^{(\ell)})$ for each $i$.
This implies that there exists a subset $B_\ell \subset \Gal(K(\sqrt[\ell]{a}) / K)$ such that $T_a^{(\ell)} = \{\sigma \in G_K \mid \sigma|_{K(\sqrt[\ell]{a})} \in B_\ell\}$.
By~\cite[Lemma~21.3.7]{FriedJ}, the family $\{T_a^{(\ell)}\}_{\ell \in \Lambda_a}$ is $\mu$-independent.
\end{proof}

\begin{proof}[Proof of Theorem~\textup{\ref{mythm-TKF}}]
By Lemma~\ref{lem-independence}, the family $\{T_a^{(\ell)}\}_{\ell \in \Lambda_a}$ is $\mu$-independent.
Using Lemma~\ref{lem-MeasureofTal}, we have
\[ \mu\left(\bigcap_{\ell\text{: prime}} T_a^{(\ell)}\right) \le \mu\left(\bigcap_{\ell \in \Lambda_a} T_a^{(\ell)}\right) = \prod_{\ell \in \Lambda_a} \mu\left(T_a^{(\ell)}\right) = \prod_{\ell \in \Lambda_a} \left(1 - \frac{1}{\ell}\right) = 0. \]
Therefore we conclude that there exists $\ell \in \Lambda_a$ such that $\sigma \notin T_a^{(\ell)}$ for almost all $\sigma \in G_K$.
\end{proof}

\begin{remark}\label{rmk-TKFness}
Let $K$, $a$, and $\Lambda_a$ be as in the discussion before Lemma~\ref{lem-MeasureofTal}.
Then we have $\sum_{\ell \in \Lambda_a} \mu(T_a^{(\ell)}) = \infty$.
The Borel--Cantelli lemma~\cite[Lemma~21.3.5]{FriedJ} in probability theory shows that, for almost all $\sigma \in G_K$, there exist infinitely many prime numbers $\ell \in \Lambda_a$ such that $\sigma \in T_a^{(\ell)}$.
Lemma~\ref{lem:not free} implies that the torsion-free $\mathbb{Z}$-module $\Gm(\overline{K}(\sigma)) / \Gm(\overline{K}(\sigma))_\tor$ is not free for almost all $\sigma \in G_K$.
\end{remark}


\bibliographystyle{amsalpha}

\end{document}